\newtheorem{theorem}{Theorem}
\newtheorem{corollary}[theorem]{Corollary}
\newtheorem{lemma}[theorem]{Lemma}
\newtheorem{proposition}[theorem]{Proposition}
\newtheorem*{result}{Result}
\newcommand{\eps}{{\epsilon}}
\begin{document}
\title{On conformally invariant CLE explorations}
\author{Wendelin Werner \and Hao Wu}
\date {}

\maketitle

\newcommand{\U}{\mathbb{U}}
\newcommand{\HH}{\mathbb{H}}
\renewcommand{\H}{\mathbb{H}}
\newcommand{\R}{\mathbb{R}}
\newcommand{\N}{\mathbb{N}}

\def\PV{\mathrm{P.V.}}

\begin{abstract}
We study some conformally invariant dynamic ways to construct the Conformal Loop Ensembles with simple loops
introduced in earlier papers by Sheffield, and by Sheffield and Werner. One outcome is a conformally invariant way
to measure a distance of a CLE$_4$ loop to the boundary ``within'' the CLE$_4$, when one identifies all points of each loop.
\end{abstract}

\section {Introduction}

The present paper will be devoted to the study of some
properties of the Conformal Loop Ensembles (CLE) defined and studied by Scott Sheffield in \cite {Sheffield09} and by Sheffield and Werner in \cite {CLE1}.

A simple CLE can be viewed as a random countable collection $(\gamma_j, j \in J)$ of disjoint simple loops in the unit disk that are non-nested (almost surely no loop surrounds another loop in CLE). The paper \cite {CLE1} shows that there are several different ways to characterize them and to construct them. In that paper, a CLE is defined to be such a random family that also possesses two important properties: It is conformally invariant (more precisely, for any fixed conformal map $\Phi$ from the unit disk onto itself, the law of $(\Phi( \gamma_j), j \in J)$ is identical to that of $(\gamma_j, j \in J)$ -- this allows to define the law of the CLE in any simply connected domain via conformal invariance) and it satisfies a certain natural restriction property that one would expect from interfaces in physical models.
It is shown in \cite {CLE1} that there exists exactly a one-parameter family of such CLEs. Each CLE law corresponds exactly to some $\kappa \in (8/3, 4]$ in such a way that for this $\kappa$, the loops in the CLE are loop-variants of the SLE$_\kappa$ processes (these are the Schramm-Loewner Evolutions with parameter $\kappa$ -- recall that an SLE$_\kappa$ for $\kappa \le 4$ is a simple curve with Hausdorff dimension $1+ \kappa /8$), and that conversely, for each $\kappa$ in that range, there exists exactly one corresponding CLE.
Part of the arguments in the paper \cite {CLE1} are based on the analysis of discrete ``exploration algorithms'' of these loop ensembles, where one slices the CLE open from the boundary and their limits (roughly speaking when the step-size of explorations tends to zero).

In the earlier paper \cite {Sheffield09}, Sheffield had pointed out a way to construct a number of random collections of loops, using variants of SLE$_\kappa$ processes. In particular, for any $\kappa \in (8/3, 4]$, he has shown how to construct random collections of SLE$_\kappa$-type loops (or rather ``quasi-loops'' that should turn out to be loops, we will come back to the precise definition later; for the time being the reader can think of these quasi-loops as boundaries of a bounded simply connected component that is a loop with at most one point of discontinuity)
that should be the only possible candidates for the conformally invariant scaling limit of various discrete models, or of level lines of certain continuous models. Roughly speaking,
one chooses some boundary point $x$ on the unit circle (``the root'') and  launches from there a branching exploration tree of SLE processes (or rather target-independent variants of  SLE$_\kappa$ processes called the SLE($\kappa, \kappa-6$) processes) that will trace some loops along the way, that one keeps track of. For each $\kappa$ and $x$, there are in fact several ways to do this. One particular way, that we will refer to as {\em symmetric} in the present paper, is to impose certain ``left-right'' symmetry in the law of the exploration tree, but several
other natural options are described in \cite {Sheffield09}. Hence, for each $\kappa$, the exploration tree is defined via the choice of the root $x$ and the exploration ``strategy'' that describes how ``left-right'' asymmetric the exploration is. These exploration strategies are particularly natural, because they are invariant under all conformal transformations that preserve $x$.
Note also that it is conjectured that the exploration indeed traces a tree with continuous branches, but that this result is not yet proved (to our knowledge). Nevertheless, these
processes indeed trace continuous quasi-loops along the way. So to sum up, once $\kappa$, $x$ and a given strategy are chosen, the Loewner differential equation enables to construct a random family of quasi-loops in the unit disc (and the law of this family a priori depends on $\kappa$, on $x$ and on the chosen strategy).

One can also note that the symmetric strategy is very natural for $\kappa=4$ from the perspective of the Gaussian Free Field, but much less
so from the perspective of interfaces of lattice models. For instance, in the case of $\kappa=3$ viewed as the scaling limit of the Ising model (see \cite {ChSm}), the ``totally asymmetric'' procedure seems more natural.

Based on the conjectured or proved relation to discrete models and to the Gaussian Free Field, Sheffield conjectures in \cite {Sheffield09} that for any given $\kappa \le 4$,
all these random collections of loops traced by the various exploration trees have the same law.

One consequence of the results of Sheffield and Werner in \cite {CLE1} is that this conjecture is indeed true for all symmetric explorations: More precisely, for each $\kappa \in (8/3, 4]$, the law of the random collection of quasi-loops traced by a symmetric SLE$(\kappa, \kappa-6)$ exploration tree rooted at $x$ does not depend on $x$. In fact, their common law is proved to be that of ``the CLE" with SLE$_\kappa$-type loops mentioned in the first paragraph of this introduction, and they can also be viewed (see \cite {CLE1}) as
outer boundaries of clusters of Brownian loop-soups (which proves that the quasi-loops are in fact all loops).
 One main idea in \cite {CLE1} is to study the asymptotic behavior of the
discrete explorations when the steps get smaller and smaller, and to prove that it converges to the above-mentioned symmetric SLE($\kappa, \kappa-6$) process.

The first main result of the present paper can be summarized as follows: {\em For each $\kappa \in (8/3, 4]$, all the random collections of SLE$_\kappa$-type quasi-loops constructed via Sheffield's asymmetric exploration trees in \cite {Sheffield09} have the same law. They all are the CLE$_\kappa$ families of loops constructed in \cite {CLE1}.} The proof of this fact will heavily rely on the results of \cite {CLE1}, but we will try to make our paper as self-contained as possible.

Recall that when one works directly in the SLE-framework, certain questions turn out to be rather natural -- it is for instance possible to derive rather directly the values of certain critical exponents, to compute explicitly probabilities of certain events, or to study questions related to conformal restriction -- while the setting of the Loewner equation does not seem so naturally suited for some other questions. Proving reversibility of the SLE path (that the random curve defined by an SLE from $a$ to $b$ is the same as that defined by an SLE from $b$ to $a$) turns out to be very tricky, see \cite {Zhan,MS2,MS3}. A by-product of \cite {CLE1} is that it provides another proof
of this reversibility in the case where $\kappa \in (8/3, 4]$. In a way, our results are of a similar nature: One obtains results about these asymmetric branching SLE processes without going into fine Loewner chain technology.

The second main point of our paper is to highlight something specific to the case $\kappa=4$ i.e. to CLE$_4$ (recall that this is the CLE that is most directly related to the Gaussian Free Field, see \cite {SchSh, SS, Du, Sheffield11}). In this particular case, it is possible to define a conformally invariant and unrooted (one does not need to even choose a starting point) growing mechanism of loops (the term ``exploration'' that is used in this paper is a little bit misleading, as it is not proved that the growth process is in fact a deterministic function of the CLE, we will discuss this at the end of the paper). Roughly speaking, the growth process that progressively discovers loops is growing ``uniformly'' from the boundary (even if it is a Poisson point process and each loop is discovered at once) and does not require to choose a root. The fact that such a conformally invariant non-local growth mechanism exists at all is quite surprising (and the fact that its time-parametrization as seen from different
points does exactly coincide even more so). It also leads to a conformal invariant way to describe distances between loops in a CLE (where any two loops in a CLE are at a positive distance of each other) and to a new coupling of CLE with the Gaussian Free Field that will be studied in more detail in the subsequent work \cite {SWW},
and to open questions that we will describe at the end of the paper.

\section {Background and first main statement}

In the present section, we recall some ideas, arguments and results from  \cite {Sheffield09, CLE1}, and set up the framework that will enable us to derive our main results in a rather simple way.

\subsection {Bessel processes and principal values}

Suppose throughout this section that $\delta \in (0,1 ]$. It is easy to define the {\em squared Bessel process} $(Z_t ,t \ge 0)$ of dimension $\delta$ started from $Z_0 = z_0 \ge 0$
as the unique solution to the stochastic differential equation
$$ dZ_t = 2 \sqrt  {Z_t} dB_t + \delta dt $$
where $(B_t, t \ge 0)$ is a Brownian motion (note that it is implicit that this solution is non-negative because one takes its square root).

The non-negative process $Y_t  = \sqrt {Z_t}$ is then usually called the Bessel process of dimension $\delta$ started from
$\sqrt {z_0}$. It is not formally the solution to the stochastic differential equation
$$dY_t = dB_t + (\delta-1) \frac {dt}{2Y_t}$$
because it gets an (infinitesimal) upwards push whenever it hits the origin, so that
$Y_t - (\delta-1) \int_0^t ds/(2Y_s)$ is not a martingale (note for instance that when $\delta=1$, the process $Y_t$ is a reflected Brownian motion, which is clearly not the solution to $dY_t = dB_t$). This stochastic differential equation however describes well the evolution of $Y$ while it is away
from the origin, and if one adds the fact that $Y$ is almost surely non-negative,
continuous and that the Lebesgue measure of $\{ t > 0 \ : \ Y_t= 0 \}$ is almost surely equal to $0$, then it does characterize $Y$ uniquely.
Note that the filtration generated by $Y$ and by $B$ do coincide ($B$ can be recovered from $Y$).

Bessel processes have the same scaling property as Brownian motion: When $Y_0=0$, then for any given positive $\rho$, $(Y_t, t\ge 0)$ and $( \rho^{-1} Y_{\rho^2 t}, t\ge 0)$ have the same law
 (this is an immediate consequence of the definition of its squared process $Z$).
Just as in the case of the It\^o measure on Brownian excursions, it is possible to define
 an infinite measure $\lambda$ on (positive) Bessel excursions of dimension $\delta$. An excursion $e$ is a continuous function $(e(t), t \in [0, \tau])$ defined on an interval of non-prescribed length $\tau=\tau(e)$ such that $e(0)=e(\tau)=0$ and $e (t) > 0 $ when $t \in (0, \tau)$. The measure $\lambda$ is then characterized by the fact that for any $x$, the mass of the set of excursions
 $$E_x := \{ e \ : \ \sup_{ s \le \tau }  e(s) \ge x \}$$ is finite, and that if one renormalizes $\lambda$ in such a way that it is a probability measure on $E_x$, then the law of $e$ on $[\tau_x, \tau]$
is that of a Bessel process of dimension $\delta$, started from $x$ and stopped at its first hitting time of the origin.
 The fact that $t \mapsto (Y_t)^{2-\delta}$ is a local martingale when $Y$ is away from the origin (which follows immediately from the definition of the Bessel process $Y$ and from It\^o's formula) shows readily that
 $$ \lambda ( E_x  ) = c x^{\delta -2}$$
 for some constant $c$ that can be chosen to be equal to one (this is the normalization choice of $\lambda$).
Standard excursion theory shows that it is possible to define the process $Y$ by gluing together a Poisson point process $(e_u, u \ge 0)$ of these Bessel excursions of dimension $\delta$, chosen with intensity $\lambda \otimes du$.

Suppose that we are given a parameter $\beta \in [-1, 1]$ and that for each excursion $e$ of the Bessel process $Y$, one tosses an independent coin in order to choose $\eps ( e)= \eps_\beta (e) \in \{-1, 1\}$ in such a way that the probability that $\eps (e) = +1$ (respectively $\eps (e) = -1$) is $(1+\beta)/2$ (resp. $(1- \beta)/2$). Then, one can define a process $X^{(\beta)}$ by gluing together the excursions $(\eps(e) \times e)$ instead of the excursions $(e)$. Note that $|X^{(\beta)}| = X^{(1)}= Y$, but some of the excursions of $X^{(\beta)}$ are negative as soon as $\beta < 1$. The process $X^{(0)}$ is the {\em symmetrized} Bessel process such that $X^{(0)}$ and $-X^{(0)}$ have the same law. Note that (as opposed to $Y=X^{(1)}$) the process $X^{(\beta)}$ is not a deterministic function of the underlying driving Brownian motion $B$ in the stochastic differential equation when $\beta \in (-1,1)$, because additional randomness is needed to choose the signs of the excursions. However,  $B$ is still a
martingale with respect to the
filtration generated by $X^{(\beta)}$ because the signs of the excursions are in a way independent of the excursions.

In the sequel, it will be useful to consider the processes $X^{(\beta)}$ for various values of $\beta$ simultaneously. Clearly, it is easy to first define $Y$ and then to couple all
signs in such a way that for each excursion $e$ of $Y$, $\eps_\beta (e) \ge \eps_{\beta'} (e)$ as soon as $\beta \ge \beta'$; we will implicitly always work with such a coupling.

\medbreak

In the context of SLE processes, it turns out to be essential to try to make sense of a quantity of the type $\int_0^t ds / X_s^{(\beta)}$.
We define, for each excursion, the integral
$$ i(e) := \int_0^\tau ds/ e(s) .$$
It is easy to check that
 $$\lambda (i(e) 1_{E_1}) < \infty,$$ from which it follows using scaling that $i(e) < \infty$ for $\lambda$ almost all excursion $e$.
 Note also that the scaling shows that
$$ \lambda ( \{ e \ : \ i(e) \ge x \} ) = x^{\delta -2} \lambda ( \{ e \ : \ i(e) \ge 1 \} ).$$
It follows that typically, the number of excursions that occur before time $1$ for which $i(e) \in [2^{-n}, 2^{-n+1})$ is of the order of $(2^{-n})^{\delta-2}$, so that their cumulative contribution to $\int_0^t ds/Y_s$ is of the order of $(2^{-n})^{\delta-1}$. If we sum this over $n$, one readily sees that when $t >0$, then
$$ \int_0^t ds / Y_s =  \infty$$
almost surely as soon as $\delta\le 1$ (and this argument can be easily made rigorous)
due to the cumulative contributions of the many short excursions during the interval $[0,t]$. Hence, $ \int_0^t ds/ X_s^{(\beta)}$ can not be defined as a
simple absolutely converging integral.

There are however ways to circumvent this difficulty. The first classical one works for all $\delta \in (0,1]$ but it is specific to the case where $\beta=0$ i.e. to the symmetrized Bessel process $X^{(0)}$. In that case, when one formally evaluates the cumulative contribution to $\int_0^t ds/ X_s^{(0)}$ of the excursions for which $i (e) \in [2^{-n}, 2^{-n+1} )$, then the central limit theorem suggests that one will get a value of the order of $2^{-n} \times (2^{(2- \delta)n})^{1/2} =2^{-\delta n/2}$; when one then sums over $n$, one gets an almost surely converging series.
This heuristic can be easily be made rigorous, and this shows that one can define a process $I^{(0)}_t$ that one can informally interpret as $\int_0^t ds/X_s^{(0)}$ (even though this last integral does not converge absolutely). Another possible way to characterize this process is that it is the only process such that:
\begin {itemize}
\item $t \mapsto I_t^{(0)}$ is almost surely continuous and satisfies Brownian scaling.
\item  $dI_t^{(0)} - dt/X_t^{(0)}$ is zero on any time-interval where $X^{(0)}$ is non-zero.
\item The process $I^{(0)}$ is a deterministic function of the process $X^{(0)}$.
\end {itemize}
Let us reformulate and detail our first approach to $I_t^{(0)}$ in a way that will be useful for our purposes. Suppose that $r >0$ is given and small. We denote by $J_r$ the set of times that belong to an excursion of $Y$ away from the origin, that has time-length at least $r^2$ (we choose $r^2$ in order to have the same scaling properties as for the height and $i(e)$). Then, because the integral $\int ds/e(s)$ on each individual excursion is finite, we see that it is possible to define without any difficulty the absolutely converging integral
$$ I_t^{(0,r)} :=  \int_0^t ds 1_{s \in J_r} / X_s^{(0)}.$$
Then,  as $r \to 0$ the continuous process $I^{(0,r)}$ converges to the continuous process $I^{(0)}$. More rigorously

\begin {lemma}
When $n \to \infty$, then on any compact time-interval, the sequence of continuous functions $I^{(0, 1/2^{n})}$ converges almost surely to a limiting continuous function $I^{(0)}$.
\end {lemma}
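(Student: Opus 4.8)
The plan is to show that the dyadic differences $D_n := I^{(0,1/2^{n+1})} - I^{(0,1/2^{n})}$ are summable, almost surely and uniformly on compacts; since $I^{(0,1/2^{N})} = I^{(0,1)} + \sum_{n=0}^{N-1} D_n$ and $I^{(0,1)}$ is a finite sum of continuous ``bumps'' (before any given time there are only finitely many excursions of length $\ge 1$), this produces a continuous limit $I^{(0)}$. The process $D_n$ records exactly the signed contributions of the excursions of $Y$ whose length $\tau(e)$ falls in the dyadic band $[2^{-2n-2}, 2^{-2n})$: such an excursion completed before time $t$ contributes $\eps(e)\, i(e)$ to $D_n(t)$, and the single excursion straddling $t$ (if any) contributes a partial integral bounded in absolute value by $i(e)$. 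The whole point of the symmetric case $\beta=0$ is that the signs $\eps(e)\in\{-1,+1\}$ are i.i.d., symmetric and independent of $Y$, so these contributions cancel in mean and only their fluctuations survive.

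To quantify the fluctuations I would condition on $Y$ and work up to a fixed local-time level: let $L$ be the local time of $Y$ at the origin and $A_\ell := \inf\{t : L_t \ge \ell\}$, so that the band-$n$ excursions occurring before $A_\ell$ form a Poisson collection with intensity $\ell\lambda$ restricted to $\{\tau\in[2^{-2n-2},2^{-2n})\}$. Ordering these excursions by time of occurrence, the partial sums $S_k := \sum_{j\le k}\eps(e_j)\, i(e_j)$ form, conditionally on $Y$, a martingale in $k$ with orthogonal increments, so Doob's $L^2$ maximal inequality gives $E[\max_k S_k^2 \mid Y] \le 4\sum_j i(e_j)^2$. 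Since at any time $t \le A_\ell$ one has $|D_n(t)| \le \max_k |S_k| + \max_j i(e_j)$, taking expectations and using $\max_j i(e_j)^2 \le \sum_j i(e_j)^2$ yields
$$ E\Big[\sup_{t\le A_\ell} D_n(t)^2\Big] \;\le\; 10\, E\Big[\sum_j i(e_j)^2\Big] \;=\; 10\,\ell\,\lambda\!\big(i(e)^2\, 1_{\tau(e)\in[2^{-2n-2},2^{-2n})}\big), $$
the last equality being Campbell's formula for the Poisson excursion process.

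It remains to evaluate the right-hand side. The scaling relations $i(\Theta_\rho e) = \rho^{-1} i(e)$ and $\tau(\Theta_\rho e) = \rho^{-2}\tau(e)$ under the Bessel scaling $\Theta_\rho: e \mapsto \rho^{-1} e(\rho^2\cdot)$, together with $(\Theta_\rho)_*\lambda = \rho^{\delta-2}\lambda$, give (with $\rho = 2^{-n}$)
$$ \lambda\!\big(i(e)^2\, 1_{\tau(e)\in[2^{-2n-2},2^{-2n})}\big) \;=\; 2^{-n\delta}\, C, \qquad C := \lambda\!\big(i(e)^2\, 1_{\tau(e)\in[1/4,1)}\big). $$
Granting $C<\infty$, we obtain $E[\sup_{t\le A_\ell}|D_n(t)|] \le (10\ell C)^{1/2}\, 2^{-n\delta/2}$, which is summable in $n$ because $\delta>0$. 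Hence $\sum_n \sup_{t\le A_\ell}|D_n(t)| < \infty$ almost surely, so $\sum_n D_n$ converges uniformly on $[0,A_\ell]$ with continuous limit. Intersecting the corresponding almost-sure events over $\ell\in\N$ and using $A_\ell \to \infty$ promotes this to uniform convergence on every compact time-interval, giving the continuous limit $I^{(0)}$.

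I expect the main obstacle to be the integrability estimate $C = \lambda(i(e)^2\, 1_{\tau(e)\in[1/4,1)}) < \infty$: a priori one only knows that $i(e)<\infty$ for $\lambda$-almost every $e$, and large values of $i(e)$ arise from excursions that hug the origin, so finiteness of the second moment on a bounded band of lengths has to be read off from the behaviour of a Bessel excursion near its two endpoints, where $e(t)$ grows like $t^{1/2}$ so that $ds/e(s)$ is integrable there with room to spare. A secondary, purely bookkeeping, difficulty is the uniform-in-$t$ control: one must ensure that the excursion open at time $t$ contributes a term dominated by the martingale maximum plus $\max_j i(e_j)$, which is precisely what the $L^2$ bound above is arranged to absorb.
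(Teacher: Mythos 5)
Your proposal is correct and takes essentially the same route as the paper: you condition on the unsigned excursion data so that the signed contributions form a martingale, apply Doob's $L^2$ maximal inequality, use the scaling of the excursion measure $\lambda$ to get a bound of order $2^{-n\delta}$, and conclude by summability -- the paper does exactly this, merely localizing at the end-time of the first excursion of length $\ge r_0^2$ instead of at a local-time level $A_\ell$, and controlling the excursion straddling $t$ by monotonicity rather than by $\max_j i(e_j)$. The second-moment finiteness $\lambda\bigl(i(e)^2 1_{\tau(e)\in[1/4,1)}\bigr)<\infty$ that you flag as the main obstacle is likewise implicit in the paper (it is the finite constant behind its ``simple scaling'' step), and it does hold for the reason you sketch, namely the repulsion of the excursion from the origin near its endpoints.
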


\begin {proof}
Let $\tau = \tau (r_0)$ denote the end-time of first excursion that has time-length at least $r_0^2$ (here $r_0$ should be thought of as very large, so that this time, which is greater than $r_0^2$, is large too). It suffices to prove the almost sure convergence on the interval $[0, \tau]$ (as any given compact interval is inside some interval $[0, \tau]$ for large enough $r_0$).

Let us suppose that $n \ge m$. Notice that the process
$t \mapsto I_t^{(0, 1/2^n)} - I_t^{(0, 1/2^m)}$ is monotonous (i.e. non-increasing or non-decreasing) on each excursion of $Y$, so that
$$\sup_{ t \le \tau } ( I_t^{(0, 1/2^n)} - I_t^{(0, 1/2^m)} )^2  = \sup_{t \le \tau, Y_t= 0}  ( I_t^{(0, 1/2^n)} - I_t^{(0, 1/2^m)} )^2.$$

Next we define the $\sigma$-field ${\mathcal F}_0$ generated by the knowledge of all excursions $| e |$, but not their signs. If we condition on ${\mathcal F}_0$ and look at the value of
 $I_t^{(0, 1/2^n)} - I_t^{(0, 1/2^m)}$ at the end-times of the excursions of length greater than $2^{-n}$, we get a discrete martingale.
 From Doob's $L^2$ inequality, we therefore see that almost surely,
$$
E \left( \sup_{ t \le \tau } ( I_t^{(0, 1/2^n)} - I_t^{(0, 1/2^m)} )^2    \mid {\mathcal F}_0 \right)  \le 4 E\left( (I_\tau^{(0, 1/2^n)} - I_\tau^{(0, 1/2^m)} )^2  \mid {\mathcal F}_0 \right).
$$

The right-hand side is in fact the mean of the square of a series of symmetric random variables of the type $\sum \eps_j i_j$ for some given $i_j$ and coin-tosses $\eps_j$. Therefore, it is equal to
$4 \sum_e i(e)^2 $
where the sum is over all excursions appearing before time $\tau$, corresponding to times in $J_{1/2^{n}} \setminus J_{1/2^{m}}$.  By simple scaling, the expectation of this quantity is equal to a constant times $2^{-m \delta} - 2^{-n\delta}$, so that finally
$$
E \left( \sup_{ t \le \tau } ( I_t^{(0, 1/2^n)} - I_t^{(0, 1/2^m)} )^2 \right) \le C ( 2^{-m \delta} - 2^{-n\delta}) \le C 2^{-m \delta}.$$
It then follows easily (via Borel-Cantelli) that almost surely, the function $t \mapsto I_t^{(0, 1/2^n)}$ converges uniformly as $n \to \infty$ on the time-interval $[0,\tau]$ (and that the limiting process $I^{(0)}$ is continuous).
\end {proof}

\medbreak
This construction of $I^{(0)}$ can not be directly extended to the case where $\beta \not=0$. Indeed,  the cumulative contributions of those excursions of $X^{(\beta)}$ for which $i(e) \in [2^{-n}, 2^{1-n})$  is then of the same order of magnitude than when $\beta=1$ (the previously described case where one looks at the integral of $1/Y_s$). A solution when the dimension of the Bessel process is smaller than $1$, is to compensate the explosion of this integral appropriately. Let us first describe this in the case where $\beta=1$ (i.e. $X^{(\beta)} = Y$ is the non-negative Bessel process). As for instance explained in \cite {Sheffield09}, Section 3,
 it is possible to characterize the principal value $I_t=I_t^{(1)}$ of the integral of $1/Y_t$ as the unique process such that:
\begin {itemize}
\item $t \mapsto I_t$ is almost surely continuous.
\item  $dI_t - dt/Y_t$ is zero on any time-interval where $Y$ is non-zero.
\item $(I_t, Y_t)$ is adapted to the filtration of $Y$ and satisfies Brownian scaling.
\end {itemize}

Let us describe how to construct explicitly this process $I_t$.
For any very small  $r$, recall the definition of the time-set $J_r$, and define  $N_r (t)$ as the number of excursions of time-length at least $r^2$ that $Y$ has completed before time $t$. Simple scaling considerations show that (for fixed $t$), $N_r (t)$ will explode like (some random number times) $r^{\delta-2}$ as $r \to 0$.

Just as before, there is no problem to define the absolutely converging integral
$$  \int_0^t \frac {1_{s \in J_r} ds}{Y_s} .$$
But, as we have already indicated, when $\beta \not= 0$ this quantity tends to $\infty$ when $r$ tends to $0$.  One option is therefore to consider the quantity
\begin {equation*}
K_t^r :=  \int_0^t \frac {1_{s \in J_r} ds}{Y_s}  - C r  N_r (t)
\end {equation*}
where
$$C r := \frac {\lambda ( i(e) 1_{\tau (e) \ge r^2}) }{ \lambda ( 1_{\tau (e) \ge r^2 })}$$  is the mean value of the integral of $1/e$
for an excursion conditioned to have length greater than $r^2$. Note that
$$ C=\lambda(i(e)1_{\tau(e)\ge 1})/\lambda(1_{\tau(e)\ge 1})$$
 is a constant that does not depend on $r.$
When $r \to 0$,  $r N_r (t)$ explodes like $r^{\delta -1}$, but nevertheless:
\begin {lemma}
 As $n \to \infty$, the process $K^{1/2^{n}}= (K_t^{1/2^{n}}, t \ge 0)$ does almost surely converge uniformly on any compact time-interval
to some continuous limiting  process $I^{(1)}$.
\end {lemma}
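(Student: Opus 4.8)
The plan is to follow the same scheme as in the previous lemma, working with the differences $K^{1/2^n}-K^{1/2^m}$ for $n\ge m$ and showing that they are uniformly small in $L^2$, so that the sequence is almost surely uniformly Cauchy. As before, it suffices to prove convergence on an interval $[0,\tau]$, where now $\tau=\tau(r_0)$ is the end-time of the first excursion of length at least $r_0^2$ (for large $r_0$, so that any fixed compact interval is eventually contained in $[0,\tau]$). I first record the relevant structure: the set $J_r$ and the counting function $N_r$ only change across excursions, and the local time $L_\tau$ accumulated up to $\tau$ has finite expectation, since the first excursion of length $\ge r_0^2$ appears after an exponential amount of local time with rate $\lambda(1_{\tau(e)\ge r_0^2})=r_0^{\delta-2}\lambda(1_{\tau(e)\ge1})$.

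The central point is to exhibit the right martingale. To each completed excursion $e$ of length $\ge 2^{-2n}$ I associate the increment that it contributes to $K^{1/2^n}-K^{1/2^m}$ at its end-time, namely
\[
\Delta(e)=\bigl(i(e)-C2^{-n}\bigr)\,1_{\tau(e)\ge 2^{-2n}}-\bigl(i(e)-C2^{-m}\bigr)\,1_{\tau(e)\ge 2^{-2m}} .
\]
By the very definition of $C$, for each $r$ one has $\lambda\bigl((i(e)-Cr)1_{\tau(e)\ge r^2}\bigr)=\lambda(i(e)1_{\tau(e)\ge r^2})-Cr\,\lambda(1_{\tau(e)\ge r^2})=0$, so that $\lambda(\Delta)=0$. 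Consequently, summing $\Delta(e)$ over the completed excursions of length at least $2^{-2n}$, taken in the order of their completion times, defines a discrete martingale $D^{n,m}$. This is exactly where the present totally asymmetric situation differs from the symmetric one: there are no signs to produce a martingale, and the martingale property instead comes from the precise compensation. It is worth stressing that the individual compensator $C2^{-m}N_{1/2^m}$ diverges, but its divergence is matched by the corresponding bias of $\sum i(e)$ restricted to excursions of length in $[2^{-2n},2^{-2m})$; the two effects cancel inside $\Delta$, which is why $D^{n,m}$ stays bounded.

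Next I would control $D^{n,m}$ in $L^2$. By Doob's $L^2$ inequality, $E(\sup_K (D^{n,m}_K)^2)\le 4\,E((D^{n,m}_{K_0})^2)$, where $K_0$ is the random number of completed excursions of length $\ge 2^{-2n}$ before $\tau$; by orthogonality of the martingale increments and optional stopping this equals $4\,E(L_\tau)\,\lambda(\Delta^2)$. The quantity $\lambda(\Delta^2)$ splits into the contribution of excursions of length $\ge 2^{-2m}$, where $\Delta=C(2^{-m}-2^{-n})$ is deterministic and yields $C^2(2^{-m}-2^{-n})^2\lambda(1_{\tau(e)\ge 2^{-2m}})=O(2^{-m\delta})$, and the contribution of excursions of length in $[2^{-2n},2^{-2m})$, which is $\lambda((i(e)-C2^{-n})^2 1_{2^{-2n}\le\tau(e)<2^{-2m}})$. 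The latter is finite and of order $2^{-m\delta}$ by the same scaling used in the previous lemma, namely that $\lambda(i(e)^2 1_{\tau(e)<a})$ is proportional to $a^{\delta/2}$ — the second moment of $i(e)$ is integrable once the excursion length is bounded above, even though it is not integrable over all long excursions. Hence $\lambda(\Delta^2)=O(2^{-m\delta})$ and $E(\sup_K(D^{n,m}_K)^2)\le C(r_0)2^{-m\delta}$.

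Finally I would pass from the control at excursion end-times to uniform control and conclude. Unlike the symmetric case, the difference $K^{1/2^n}-K^{1/2^m}$ is not monotone on each excursion: on an excursion of length in $[2^{-2n},2^{-2m})$ it increases by the finite integral of $1/Y$ and then drops by $C2^{-n}$ at the end-time, while on excursions of length $\ge2^{-2m}$ or $<2^{-2n}$ it is constant between end-times. The only overshoot of the running maximum beyond its values at excursion end-times is therefore at most $C2^{-n}$, so that $\sup_{t\le\tau}|K^{1/2^n}_t-K^{1/2^m}_t|\le \sup_K|D^{n,m}_K|+C2^{-n}$. Combining this with the Doob bound and Borel--Cantelli gives almost sure uniform convergence of $K^{1/2^n}$ on $[0,\tau]$; since the jumps of $K^{1/2^n}$ are bounded by $C2^{-n}\to0$, the limit $I^{(1)}$ is continuous. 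I expect the \textbf{main obstacle} to be precisely this martingale/cancellation step: recognising that the diverging compensator and the diverging partial sums combine into a genuine martingale whose increments are centred (which is exactly what forces the specific value of $C$) and whose second moment is finite and of the correct order $2^{-m\delta}$, with the finiteness coming only from restricting to excursions of bounded length.
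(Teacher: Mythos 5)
Your proof is correct and follows essentially the same route as the paper's: the same reduction to $[0,\tau]$, the same discrete martingale obtained by tracking the difference at end-times of excursions (whose centring, as you rightly stress, is exactly what the definition of $C$ provides), the same Doob $L^2$ inequality plus scaling bound of order $2^{-m\delta}$ (the paper's $c\,r_0^{2-\delta}r^\delta$), and the same Borel--Cantelli conclusion. Your explicit computation of $\lambda(\Delta^2)$, the overshoot bound $C2^{-n}$ for passing from end-times to all times, and the vanishing-jumps argument for continuity of the limit simply flesh out steps the paper states more tersely (its ``geometric number of zero-mean random variables with bounded second moment'' and its $\tilde K^r$ interpolation device), so the two arguments coincide in substance.
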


\begin {proof}
The proof goes along similar lines as in the case $\beta=0$, but there are some differences. Let us  prove again almost sure convergence on each $[0,\tau]$ for each given $r_0$.

First, let us notice that for each $r>r'$, the quantity $ K_t^r - K_t^{r'}$ is constant, except on the excursion-intervals of time-length between $r'$ and $r$.
If we follow the value of this quantity only at (the discrete set of) end-times of excursions of length greater than $r'$,  we get a discrete martingale.
Doob's inequality and scaling, just as before, imply that
$$ E \left( \sup_{t \le \tau :  Y_t = 0 } ( K^r_t - K_t^{r'})^2  \right) \le  E \left(  ( K^r_\tau - K_\tau^{r'})^2  \right) .$$
The right-hand side can be viewed as the sum of a geometric number of zero-mean random variables with bounded second moment, and it
follows from scaling that it can be bounded by
$$c r_0^{2-\delta} r^\delta.$$
It follows that almost surely,  the function $K^{1/2^{n}}$ converges uniformly as $n \to \infty$ on the set $\{ t \le \tau \ : \ Y_t = 0 \}$. The definition of $K^{r}$ then yields that this almost sure uniform convergence takes place on all of $[0,\tau]$ (just because the supremum of the integral of $1/Y_s$ over all excursions of length greater than $r$ before $\tau$ goes to $0$ as $r \to 0$).

On the other hand, if we slightly modify $K^r$ on each excursion interval by adding a linear function that makes it continuous on the closed support of the excursion in order to compensate the $-Cr$ jump of $K^r$ and the end-time of the excursion, one obtains a continuous function $\tilde K^r$ such that
$| K^r_t - \tilde K^r_t | \le Cr$ for each $t$. It follows that  almost surely
$ \tilde K^{1/2^{n}}$ converges uniformly on any compact time-interval to the same limit as $K^{1/2^{n}}$. As the functions $\tilde K^r$ are continuous, it follows that this limit is almost surely a continuous function of time.
\end {proof}

\medbreak

For any $\beta$ (and as long as $\delta \in (0,1)$), the very same idea can be used to define a process $I^{(\beta)}$ associated to $X^{(\beta)}$ instead of $Y$, as the limit when $r \to 0$ of the process
$$  \int_0^t \frac {1_{s \in J_r} ds}{X_s^{(\beta)}}  - C r \beta   N_r (t).
$$

\medbreak

Let us describe in more detail a variant of the previous construction that will be useful for our purposes. Suppose that one is working with the
coupling of all processes $X^{(\beta)}$ (for fixed $\delta \in (0,1)$).  We then define the process
$$X^{(\beta, r)}_t := X^{(\beta)}_t 1_{t \in J_r} + X^{(0)}_t 1_{t \notin J_r}.$$
In other words, we replace $X^{(\beta)}$ by $X^{(0)}$ on all excursions of length smaller than $r^2$.
 Clearly, this makes it possible to make sense of the continuous process
$$ \int_0^t \left( \frac {1}{X_s^{(\beta, r)} } - \frac 1 {X_s^{(0)}} \right) ds$$
(because only the times in $J_r$ i.e. in the macroscopic excursions will contribute).
We can therefore define the process
$$ I_t^{(\beta, r)}:=  I_t^{(0)}  +    \int_0^t  1_{s \in J_r} \left( \frac {1}{X_s^{(\beta, r)} } - \frac 1 {X_s^{(0)}} \right) ds - \beta C r N_r (t).$$
This process $I^{(\beta, r)}$ follows exactly the evolution of $I^{(0)}$ except that some excursions of length greater than $r^2$ are sign-changed (and on these excursions $I_t^{(\beta, r)} + I_t^{(0)}$ is constant), and that at the end of each of those excursions, it makes a small jump of $-\beta C r$.

Then, almost surely, when $r \to 0$, the process $I^{(\beta, r)}$ converges uniformly on any compact time-interval to a process $I^{(\beta)}$ because the two processes
$$ I^{(0)}_t - \int_0^t 1_{s \in J_r} ds / X_s^{(0)}$$
and
$$ \int_0^t 1_{s \in J_r} ds / X_s^{(\beta)} - \beta C r N_r (t) - I^{(\beta)}_t $$
do almost surely uniformly converge to zero on any given compact interval.

\medbreak

The previous definition of $I^{(\beta)}$ can not be directly adapted to the case $\delta=1$.
However, one notes that for any real $\mu$, the process
$I_t^{<\mu>} := I_t^{(0)} +  \mu \ell_t$, where $\ell$ is the local time at $0$ of $X$ does also satisfy the Brownian scaling property and that $dI_t^{<\mu>} = dt / X_t$  on all intervals where $X$ is non-zero. This process $I^{<\mu>}$ can in fact again be approximated via $N_r(t)$ (using the classical approximation of Brownian local time); more precisely, it is the limit as $r \to 0$ of the process
$$ I_t^{<\mu, r >} := I_t^{(0)} + \mu r N_r(t) .$$

\subsection {From Bessel processes to ensembles of (quasi-)loops}

We now recall (mostly from \cite {Sheffield09}) how to use the previous considerations in order to give an SLE based construction of the loop ensembles.
If we work in the upper half-plane $\H$, then Loewner's construction shows that as soon as one has defined a continuous real-valued function
$(w_t, t \ge 0)$, one can define a two-dimensional ``Loewner chain'' (that in many cases turns out to correspond to a two-dimensional path) as follows:
For any $z \in \overline \H$, define the solution $(Z_t=Z_t(z))$ to the ordinary differential equation
$$ Z_t = z + \int_0^t \frac {2 ds}{Z_s-w_s}.$$
This equation is well-defined up to a (possibly infinite) explosion/swallowing time $T(z) = \sup \{ t \ge 0 \ : \ \inf \{ |Z_s - w_s | \ : \ s \in [0,t ) \} > 0 \}$.
For each given $t$, the map $g_t: z \mapsto Z_t(z)$ is a conformal map from some subset $H_t$ of $\H$ onto $\H$ such that $g_t (z) - z = o(1)$ when $z \to \infty$.
One defines $K_t = \overline {\HH \setminus H_t}$; the Loewner chain usually means the chain $(K_t, t \ge 0)$.

When $w_t$ is chosen to be equal to $\sqrt {\kappa} B_t$, where $B$ is a standard real-valued Brownian motion, then this defines the SLE$_\kappa$ processes, that turn out to be simple curves as soon as $\kappa \le 4$.
The so-called SLE($\kappa, \kappa-6$) processes are variants of SLE$_\kappa$ with a particular target independence property first pointed out in \cite {WilsonSchramm}. More precisely, suppose that one considers the joint evolution of two points $(W_t, O_t)$ in $\R$ started  from  $(W_0, O_0)$ with $W_0 \not= O_0$, and described by
\begin {equation}
 dO_t = \frac {2dt}{O_t - W_t} \hbox { and } dW_t = \sqrt {\kappa} dB_t + \frac {\kappa-6}{W_t - O_t} dt
 \label {evol}
 \end {equation}
as long as $W_t \not= O_t$ (where $B$ is a standard Brownian motion). Then, one can use the random function $W$ as the driving function of our Loewner chain, which is this SLE$(\kappa, \kappa-6)$. There is no difficulty in defining the process $(W,O)$ as long as $W_t$ does not hit $O_t$, but more is needed to understand what happens after such a meeting time.

Note that if one writes $X_t = (W_t - O_t)/ \sqrt {\kappa}$, then
$$ dX_t = dB_t + \frac {\kappa-4}{\kappa X_t} dt $$
so that $X$ evolves like a Bessel process of dimension $$\delta= 3- \frac {8}{\kappa} \in (0, 1]$$ when $\kappa \in (8/3, 4]$, and that $dO_t$  is a constant multiple of $dt/X_t$ as long as $X_t \not=0$.
Furthermore, the knowledge of $X_t$ and of $O_t$ enables to recover $W_t = O_t + \sqrt {\kappa} X_t$.

This gives the following options to define a driving process $(W_t, t\ge 0)$ at all non-negative times (even for $W_0=O_0=0$):
\begin {itemize}
\item When $\delta\in (0,1)$ (i.e., $\kappa \in (8/3, 4)$) and $\beta \in [-1, 1]$:
Define first one of the Bessel processes $X^{(\beta)}$ as before started at $0$, and its corresponding process $I^{(\beta)}$.
Then define $O_t^{(\beta)}= 2 \sqrt{\kappa} I_t^{(\beta)}$ and
$$W_t^{(\beta)} =  \sqrt{\kappa} X_t^{(\beta)} + O_t^{(\beta)} = \sqrt{\kappa} X_t^{(\beta)} + 2 \sqrt{\kappa} I_t^{(\beta)}.$$
\item When $\delta=1$ (i.e., $\kappa = 4$) and $\mu \in \R$, then, define $O_t^{<\mu>} = 4 I_t^{<\mu>}$ and
$$W_t^{<\mu>} = 2 B_t + O_t^{<\mu>} = 2 B_t +  4 I_t^{<\mu>}$$  where $B$ is standard one-dimensional Brownian motion.
\end {itemize}

In all these cases, one constructs a couple $(W_t, O_t)$ that satisfies the Brownian scaling property and that evolves according to (\ref {evol}) when $W_t \not= O_t$. The process $(W_t, t \ge 0)$
defines a Loewner chain $(K_t, t \ge 0)$ from the origin to infinity in the upper half-plane. More precisely, for each $t$, $H_t := \HH \setminus K_t$ is the preimage of $\HH$ under the conformal map $g_t$ characterized by the fact that for all $s \le t$,
$$ g_0 (z) = z \hbox { and } \partial_s g_s (z) = \frac {2}{g_s (z) - W_s }$$
(see e.g. \cite {Lawler} for background).
The Brownian scaling property shows that this Loewner chain is invariant (in law) under scaling (modulo time-parametrization).
 This makes it possible to also define (via conformal invariance) the law of the Loewner chain in $\H$ from $0$ to some $u \in \R$ (and more generally from any boundary point to any other boundary point of a simply connected domain) by considering the conformal image of the previously defined chain from $0$ to infinity under a conformal map from $\H$ onto itself that maps $0$ onto itself, and $\infty$ onto $u$.
 
 In the sequel, when we will refer to ``a SLE($\kappa, \kappa-6$) process'', we will implicitely mean such a chordal chain, for some $\kappa \in (8/3,4]$, and some choice of $\beta$ (if $\kappa \in (8/3,4)$) or $\mu$ (when $\kappa = 4$). 

\medbreak
 All these SLE($\kappa, \kappa-6$) processes are of particular interest because of their target-independence property: Up to the first time at which the Loewner chain disconnects $u$ from infinity, the two Loewner chains (from $0$ to $\infty$, and from $0$ to $u$) have the same law (modulo time-change).
When $O_t - W_t$ is not equal to $0$, the fact that the local evolution of chain is independent of the target point is derived (via It\^o formula computations for $(O_t, W_t)$) in \cite {WilsonSchramm}. Note that the two evolutions match up to a time-change only, because time corresponds to the size of the Loewner chain seen from either infinity or from $u$. In order to check that target-independence remains valid at all times, one needs to check that the ``local push'' rule that is used in order to define $I_t^{(\beta)}$ (or $I_t^{<\mu>}$ when $\kappa=4$) and then $O_t$ is also the same (modulo the time-change) for both processes. This is basically explained in Section 7 of \cite {Sheffield09}.

\medbreak

In fact, if we formally replace the Bessel process $X$ by just one Bessel excursion $e$, the procedure defines an SLE ``pinned loop'' or SLE ``bubble'' i.e. a continuous simple curve in $\HH \cup \{ 0\}$
that passes through the origin. More precisely, let us start with the excursion $(e(s), s \le \tau)$  and use the driving function
$$ w_t  := \sqrt {\kappa} e(t) + 2 \sqrt {\kappa} \int_0^t ds / e(s) $$
to generate the Loewner chain. It is shown in \cite {CLE1} that for almost all (positive) Bessel excursion $e$ (according to the Bessel excursion measure that we have denoted by $\lambda$), this defines a such a simple loop $\gamma(e)$ in the upper half-plane, that starts and ends at the origin (and $H_t = \HH\setminus \gamma (0,t]$). The time-length $\tau (e)$ of the loop corresponds to the half-plane capacity seen from infinity of $\gamma$. The infinite measure on loops $\gamma$ that one obtains when starting from the infinite measure $\lambda$ on Bessel excursions is referred to as the one-point pinned measure in \cite {CLE1}.

Hence, for each excursion $e$ of $X$, corresponding to the excursion interval $[t_- (e), t_+(e)]$ (with $t_+ - t_- = \tau(e)$), one can define the preimage of $\gamma (e)$ under the conformal map $g_{t_-}$
of the Loewner chain. It is not clear at this point that this is a proper continuous loop in $\H$ because we do not know whether $g_{t_-}^{-1}$ extends continuously to the origin, but we already know that it is almost a loop: In particular, the preimage of $\gamma (e) \setminus \{ 0 \}$ is a simple curve such that its closure disconnects some interior domain from an outer domain in $H_t$.
 In the sequel, we will refer to this as a quasi-loop (mind that in \cite {Sheffield09}, this is called a ``conformal loop'' and that the term quasi-simple loops is used for something different).
 One of the consequences of \cite {CLE1} is that {\em in the symmetric case}, all these quasi-loops are in fact loops
 (here one uses the alternative construction using Brownian loop-soup clusters).
 For the other cases, we shall see that it is also the case, but at this stage of the proof, we do not know it yet.

\medbreak

As explained in \cite {Sheffield09}, the target-independence makes it possible to define (for each version of the SLE($\kappa, \kappa-6$) that we have defined, and that we will implicitely keep fixed in the coming three paragraphs)  a ``branching SLE'' structure starting from $0$ and aiming at a dense set of points in the upper half-plane. Let us first describe the process targeting 
$i$, until it discovers a quasi-loop around $i$ (this is the ``radial'' SLE($\kappa, \kappa-6$); we choose here not to introduce the radial Loewner equation but to explain 
this radial process via the chordal setting): 
Consider a chordal SLE($\kappa, \kappa-6$) from $0$ to $\infty$ until time $t_1$, which is the first end-time of an excursion of $X$ after the first moment at which the Loewner chain reaches the (half)-circle of radius $1/2$ away from the origin. One has two possibilities, either at $t_1$, the Loewner chain has traced a quasi-loop around $i$ (and it can be only the quasi-loop that the chain had started to trace when reaching the circle, and $t_1$ is the end-time of the corresponding excursion)  or not. In the latter case, it means that $i$ is still in the remaining to be explored unbounded simply connected component $H_{t_1}$ of the chain at time $t_1$. At this time, the chain is growing at a prime end (i.e. loosely speaking, a boundary point) of $H_{t_1}$. We can now consider the conformal transformation $F_1$ of $H_{t_1}$ onto $\H$ that maps this prime end onto the origin, and keeps $i$ fixed (another way to describe $F_{1}$ is to say that it is the composition of $g_{t_1}$ with the Moebius transformation of the upper half-plane that maps $g_{t_1}(i)$ onto $i$ and $W_{t_1}$ onto $0$). Then, we repeat the same procedure again: Grow an SLE($\kappa, \kappa-6$) from the origin in $\H$ until the first end-time of an quasi-loop that touches the circle of radius $1/2$ etc., and we look at its preimage under $F_1$ (this can be interpreting as a switch of chordal target at $t_1$, this pre-image chain now aims at $F_1^{-1} ( \infty)$ instead of $\infty$). After a geometric number of iterations, one finds a chordal SLE($\kappa, \kappa-6$) in $\H$ that catches $i$ via a quasi-loop that intersects the circle of radius $1/2$. The concatenation of the preimages of these Loewner chains is what is called the radial SLE($\kappa, \kappa-6$) targeting $i$, and the preimage of this quasi-loop that surrounds $i$ is the quasi-loop $\gamma (i)$ defined by this radial SLE. 

In a similar way, one can define for each given $z \in \H$, the radial SLE($\kappa, \kappa-6$) targeting $z$, and the quasi-loop $\gamma (z)$ that it discovers.
The target-independence property of chordal SLE($\kappa, \kappa-6$) can be then used in order to see that for any $z_1,\ldots, z_n$, it is possible to couple all these radial explorations in such a way that for any $k \not= j$, the 
explorations targeting $z_k$ and $z_j$ coincide as long as $z_k$ and $z_j$
remain in the same connected component, and that they are conditionally independent after the moment at which $z_k$ and $z_j$ are disconnected from each
other. Hence, one can define a process $(\gamma (z), z \in \HH)$ of quasi-loops via the law of its finite-dimensional marginals, that has the property that for all $z, z'$, on the event where $\gamma (z)$ surrounds $z'$, one has $\gamma (z')= \gamma (z)$. The countable collection of loops that is defined for instance via the loops that surround points with rational coordinates is the CLE constructed by 
Sheffield \cite {Sheffield09} associated to this value of $\kappa$ and $\beta$ (or $\mu$).

Let us repeat that the law of this family of quasi-loops is characterized by its finite-dimensional distributions (i.e. the laws of the families $(\gamma (z_1), \ldots, \gamma (z_k))$ for any finite set $\{ z_1, \ldots , z_k\}$ in the upper half-plane). A convenient topology to use in order to define these random quasi-loops is, for the loop $\gamma(z)$, to use the Carath\'eodory topology for the inside of the quasi-loop as seen from $z$. In the sequel, when we will say that a sequence of CLE's converges in law to another CLE in the sense of finite-dimensional distributions, we will implicitly be using this topology. 

\medbreak

Note that a quasi-loop will be traced clockwise or anti-clockwise, depending on the sign of the corresponding excursion. For instance, for $\beta=1$, all quasi-loops are traced anti-clockwise.

Let us summarize what these procedures define:
\begin {itemize}
\item
When $\kappa \in (8/3, 4)$, for each $\beta \in [-1, 1]$ and for each boundary point $x$ (corresponding to our choice of starting point in the previous setting) a random family of quasi-loops that we can denote by CLE$_\kappa^{\beta} (x)$.
\item
When $\kappa= 4$, for each $\mu \in \R$ and each boundary point $x$, a random family of quasi-loops that we denote by CLE$_{4, \mu} (x)$.
\end {itemize}

The construction and the target-independence ensures that these CLE's are conformally-invariant (this follows basically from the target-independence property, see \cite {Sheffield09}),
i.e. for each given $x$, $\kappa$ and $\beta$ (or $\mu$), and any Moebius transformation of the upper half-plane, the image of a
 CLE$_\kappa^{\beta} (x)$ under $\Phi$ is distributed like a CLE$_\kappa^{\beta} (\Phi(x))$.

 \medbreak
 
In \cite {CLE1}, it is proved that the law of a CLE$_\kappa^0 (x)$ does not depend on $x$, and that the law of CLE$_{4,0}(x)$ does not depend on $x$
(note that these are the families of quasi-loops constructed via the {\em symmetric} exploration procedure). 
Furthermore, for these ensembles, all quasi-loops are almost sure plain loops (the proof of these last facts are  based on the fact that these loop ensembles are the only family of loops that satisfy some axiomatic properties, and that they can also be constructed as boundaries of clusters of Brownian loops).

\section {The asymmetric explorations}

The present section is devoted to the proof of the following proposition and to its analogue for $\kappa=4$, Proposition \ref {eqvincle2}):

\begin{proposition}\label{eqvincle}
For all given $\kappa \in (8/3, 4)$, the law of CLE$_\kappa^\beta (x)$ does depend neither on $x$ nor on $\beta$.
\end{proposition}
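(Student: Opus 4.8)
The plan is to fix the root $x$ and prove that CLE$_\kappa^\beta(x)$ has the same law as the symmetric ensemble CLE$_\kappa^0(x)$ for every $\beta\in[-1,1]$. Since \cite{CLE1} already guarantees that the law of CLE$_\kappa^0(x)$ does not depend on $x$, the two statements combine at once: for all $\beta,\beta'$ and $x,x'$ one gets CLE$_\kappa^\beta(x)\overset{d}{=}$ CLE$_\kappa^0(x)\overset{d}{=}$ CLE$_\kappa^0(x')\overset{d}{=}$ CLE$_\kappa^{\beta'}(x')$. Because each of these laws is determined by its finite-dimensional marginals (the joint law of the quasi-loops $\gamma(z_1),\ldots,\gamma(z_n)$ in the Carath\'eodory topology), and because the branching structure is governed by the target-independence property, it suffices to compare these marginals using the radial explorations targeting $z_1,\ldots,z_n$.

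First I would organize the comparison through the coupling of all the processes $X^{(\beta)}$ and the interpolating objects $I^{(\beta,r)}$ built above. The useful feature of $I^{(\beta,r)}$ is that it coincides with $I^{(0)}$ outside the macroscopic excursions (those of time-length at least $r^2$), of which only finitely many occur before any fixed time; on each such excursion the sign may be flipped relative to the symmetric choice, and at its right endpoint one adds the deterministic jump $-\beta C r$. Driving the Loewner chain by $W^{(\beta,r)}=\sqrt\kappa\, X^{(\beta,r)}+2\sqrt\kappa\, I^{(\beta,r)}$ yields an approximating family of quasi-loops, and the first task is to check that, as $r\to0$, these families converge in the sense of finite-dimensional distributions to CLE$_\kappa^\beta(x)$. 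This should follow from the uniform convergence $I^{(\beta,r)}\to I^{(\beta)}$ proved above, together with the continuity of the Loewner flow, and of the Carath\'eodory data of the discovered quasi-loops, in the driving function.

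The heart of the argument is then to show that, for each fixed $r$, the approximating family has the same law as the symmetric one, and here I would exploit the two invariances that CLE$_\kappa^0$ already possesses. On the one hand, flipping the sign of a macroscopic excursion reflects the bubble it traces; since in the symmetric construction the signs are fair coins independent of the excursion shapes, simultaneously reversing signs is a symmetry of the law, so CLE$_\kappa^0$ is reflection-invariant. On the other hand, the jump of $O_t=2\sqrt\kappa\, I_t$ merely displaces the marked force point, and since $O_t$ follows the Loewner flow this displacement amounts to post-composing the updated picture with a M\"obius transformation of $\H$ fixing the growth point; by the conformal invariance and root-independence of CLE$_\kappa^0$ established in \cite{CLE1}, the discovered ensemble is insensitive to it. What allows one to apply these two invariances one excursion at a time is the renewal coming from target-independence: at the right endpoint of each macroscopic excursion the exploration restarts, conditionally on the past, as a fresh SLE$(\kappa,\kappa-6)$ in the updated domain. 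One can therefore peel off the (a.s.\ finitely many) macroscopic excursions one by one, absorbing each sign flip by reflection-invariance and each $O$-jump by conformal/root-invariance, and conclude the equality of laws at level $r$ by induction; letting $r\to0$ finishes the comparison.

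The step I expect to be the main obstacle is this limit, together with the control of the principal-value compensation. One must show that the cumulative effect of the infinitely many \emph{microscopic} sign differences, and of the compensating drift $-\beta C r\, N_r(t)$ whose uncompensated part diverges like $r^{\delta-1}$, produces in the limit exactly a (random) conformal change to which CLE$_\kappa^0$ is insensitive, rather than a genuine distortion of the loop ensemble. Concretely, the delicate point is to interchange the $r\to0$ limit with the successive applications of reflection- and conformal-invariance in the peeling; this will require the uniform estimates on $I^{(\beta,r)}-I^{(0)}$ furnished by the lemmas above, and a verification that the Carath\'eodory data of the finitely many relevant quasi-loops depend continuously on the driving function uniformly along the peeling procedure.
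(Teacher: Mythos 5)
Your overall architecture does match the paper's: couple the processes $X^{(\beta)}$, drive Loewner chains by $W^{(\beta,r)}=\sqrt\kappa\, X^{(\beta,r)}+2\sqrt\kappa\, I^{(\beta,r)}$, show that for each fixed $r$ the traced loops are CLE$_\kappa$ loops by peeling macroscopic excursions at the renewal times $T_n(r)$ (sign choices plus the deterministic jumps, the latter absorbed by root-independence), and then let $r\to0$. However, both of the key steps have genuine gaps as you present them.

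First, the sign-flip step. You derive reflection-invariance of CLE$_\kappa^0$ from the fact that the signs are fair coins (flipping \emph{all} signs is a symmetry), and then claim to ``absorb each sign flip by reflection-invariance.'' This inference fails: invariance of the law of the ensemble under the global flip $S\mapsto -S$ is automatic and carries no information about whether the two conditional laws of the ensemble given $\eps(e)=+1$ and given $\eps(e)=-1$ for a \emph{single fixed} excursion coincide, and it is the latter that is needed to replace fair coins by biased ones. (Toy example: if the ensemble simply reported the sign of the first macroscopic excursion, its law would still be invariant under the global flip, yet biasing that sign would change it.) What is actually required is that the one-point pinned loop measure $\mu^0$ is invariant under $e\mapsto -e$, i.e.\ that the bubble $\gamma(e)$ and its mirror image $\gamma(-e)$ have the same law. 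This is the paper's first observation, and it is a nontrivial input imported from the characterization/uniqueness theorem of \cite{CLE1} (closely related to reversibility of SLE); it cannot be obtained from the coin-tossing symmetry of the construction alone. Without it, the base case of your peeling induction is unproved.

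Second, and more seriously, the $r\to0$ limit, which you correctly flag as the main obstacle but resolve by asserting ``continuity of the Loewner flow, and of the Carath\'eodory data of the discovered quasi-loops, in the driving function.'' That assertion is precisely what fails: uniform convergence $W^{(\beta,r)}\to W^{(\beta)}$ does \emph{not} imply Carath\'eodory convergence of the relevant domains, because at a disconnection time an arbitrarily small perturbation of the driving function can make the chain enter (or avoid) the soon-to-be-cut-off component, and the disconnection time for $W^{(\beta,r)}$ need not converge to that for $W^{(\beta)}$. The paper's proof is built around circumventing exactly this: it splits according to whether $z$ is swallowed by a quasi-loop ($A_1^{(\beta)}(z)$) or disconnected without being surrounded ($A_2^{(\beta)}(z)$); on the second event it introduces the stopping times $\tau_\eps^{(\beta)}$ (first time $z$ can be disconnected by removing an $\eps$-ball, pushed to the end of the current excursion interval), which approximate $\tau^{(\beta)}$ strictly from below; it chooses $r(\eps)$ so that $H^{(\beta,r(\eps))}_{\tau_\eps^{(\beta)}}$ converges to $\Omega^{(\beta)}$; it \emph{resamples} an independent CLE$_\kappa$ in that approximating domain when no loop around $z$ has yet been traced, so that the constructed loop has exactly the CLE one-point law for every $\eps$; and it then iterates the whole argument by branching inside $\Omega^{(\beta)}$. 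For the multi-point marginals one additionally needs that the CLE law is continuous under decreasing limits of domains (a fact taken from the loop-soup construction). None of this machinery—which is the real content of the paper's proof—appears in your proposal, and no alternative route around the failure of Carath\'eodory continuity is offered.
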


Note that this also proves that in these ensembles, all quasi-loops are indeed loops because we already know that CLE$_{\kappa}^0 (x)$ almost surely consists of loops. Recall also that we know that this CLE$_\kappa^0 (x)$ does not depend on $x$ (in the present section, we will simply refer to it as the CLE$_\kappa$). It is therefore enough to check that in the upper half-plane, the laws of  CLE$_\kappa^\beta (0)$ and of CLE$_\kappa^0 (0)$ coincide, ie., that for any given $z_1, \ldots, z_n$ in the upper half-plane,
the joint law of $(\gamma (z_1), \ldots, \gamma (z_n))$ is the same for these two CLEs.

\medbreak

Let us first make the following observations:

\begin {enumerate}

\item
A by-product of the characterization/uniqueness of CLE derived in \cite {CLE1} is the fact that the image measure $\mu^0$ on pinned loops
$\gamma (e)$ (i.e. the image measure of $\lambda$ under $e \mapsto \gamma (e)$) is invariant under the symmetry with respect to the imaginary axis ($x+iy \mapsto -x + iy$). In other words, it is the same as the image measure on loops defined by $e \mapsto \gamma (-e)$
if one forgets about the time-parametrization of the loops (this is closely related to the reversibility of SLE paths first derived by Zhan \cite {Zhan} by other more direct means).
Note that the time-length $\tau (e)$ is both the half-plane capacity of $\gamma (e)$ and of $\gamma (-e)$ for a given $e$.

\item
Suppose now that we consider a {\em symmetric} SLE($\kappa, \kappa-6$) with driving function $W$, that we stop at the first time $T_1$ at which it completes an excursion of length greater than $r^2$. This defines a certain family of loops in the unit disc via the procedure described above. If we are given some sign $\epsilon_1 \in \{ -1, +1 \}$ (independent of the process $W$), we can decide to modify the previous process $W$ into another process $\hat W$, by just (maybe) changing the sign of the final excursion before $T_1$ into $\epsilon_1$ (it may be that we do not have to change it in order to have it equal to $\epsilon_1$). Then, clearly, the law of $(\hat W, t \le T_1)$ is absolutely continuous with respect to that of $(W_t, t \le T_1)$, and it therefore also defines almost surely a family of loops in the unit disk. The previous item shows in fact that the collection of
loops defined by these two processes up to $T_1$ have exactly the same law (because changing the sign of one final excursion does not change the distribution of the corresponding loop).
Furthermore, the law of the collection of connected components of the complement of this symmetric SLE($\kappa, \kappa-6$) at that moment are clearly identical too.

\item
Suppose that $T$ is some stopping time for the driving function $(W_t, t \ge 0)$ of the symmetric SLE($\kappa, \kappa-6$) process started from the origin in the half-plane.
Suppose furthermore that this stopping time is chosen in such a way that almost surely $W_T = O_T$. Then, we know that the process $(W_{T+t} - W_T, t \ge 0)$ is distributed exactly as $W$ itself, and furthermore, it is independent of $(W_t, t \le T)$.
This means that the conditional law given $(W_t, t \le T)$ of the non-yet explored loops is just a CLE$_\kappa$ in the yet-to-be explored domain. This makes it possible to change the starting point of the upcoming evolution, because we know that the law of the loops defined by the branching symmetric SLE($\kappa, \kappa-6$) is independent of the chosen starting point. In particular, if we consider an increasing sequence of stopping times $T_n$ (and $T_0=0$) such that
$T_n \to \infty$ almost surely and $W_{T_n}= O_{T_n}$ for each $n$, and define the process
\begin {equation}
\label{cstjumps}
\hat W_t = W_t + c N_t
\end {equation}
where $c$ is some constant and $N_t = \max \{ n \ge 0 \ : \ T_n \le  t \}$, the planar  loops associated with the excursions intervals of the Bessel process $X$ will be distributed according to loops in a CLE$_\kappa$. More precisely, for each $n$, if one samples (conditionally on the process up to $T_n$) independent CLE$_\kappa$'s in the remaining unexplored connected components created by the Loewner chain at time $T_n$, and considers the union of the obtained loops with the loops that have been discovered before $T_n$, then one gets a full CLE$_\kappa$ sample.
\end {enumerate}

Let us now combine the previous facts in the case where $\kappa <4$ (i.e., $\delta <1$).
Suppose now that $r >0$ is fixed. Consider a symmetrized Bessel process $X^{(0)}$ and the corresponding driving function $W^{(0)}$. Define the stopping times
$T_n (r)$ as the end-time of the $n$-th excursion of length at least $r^2$ of $|X^ {(0)}|$.  Up to time $T_1 (r)$, we perform the exploration using the driving function $W_t^{(0)}$ (that is defined using the symmetrized Bessel process) except that the sign of the last excursion may have been changed depending on the sign $\eps_1 (r)$. Note that we have just recalled that this procedure defines exactly loops of a CLE$_\kappa$.
Note  that $T_1 (r)$ is the end-time of an excursion and corresponds exactly to the completion of a CLE loop. Then, we force a jump of
 $-\beta C r 2 \sqrt {\kappa} $ of the driving function, and continue from there until time $T_2 (r)$ by following the dynamics of $W_t^{(0)}$ (possibly changing the sign of the last excursion before $T_2 (r))$. At $T_2 (r)$, we again wake a jump of $-\beta C r 2 \sqrt {\kappa}$.
Combining the previous two items shows that for any $n$ and $r$, the following procedure constructs a CLE sample:
\begin {itemize}
 \item Sample the previous process until time $T_n (r)$. Keep all the loops corresponding to the excursions of $X$.
 \item In each connected components created along the way by this Loewner chain (one can view these connected components as those of the complement of the closure of the union
 of the interior of all the loops created by the chain -- because these loops are ``dense'' in the created chain) as well as in the remaining unbounded component, sample independent CLE$_\kappa$'s.
\end {itemize}
Note that the result still holds, if for some given $r$, one stops the process at a stopping time which is the end-time of some loop (not necessarily a $T_n(r)$).

\medbreak

Let us now look at the driving function of the previously defined Loewner chain. It is exactly the one that would obtain if the signs of the excursions of length larger than $r^2$ are those given by $\eps_1 (r), \eps_2 (r)$ etc., and one also puts in the negative jumps at each time $T_n(r)$. This corresponds exactly to the driving function
$$ W^{(\beta, r)}_t := \sqrt {\kappa} X_t^{(\beta, r)} + 2 \sqrt {\kappa} I_t^{(\beta, r)}.$$

\medbreak
\begin{figure}[ht!]
\begin{center}
\begin{subfigure}[b]{0.45\textwidth}
\centering
\includegraphics[width=\textwidth]{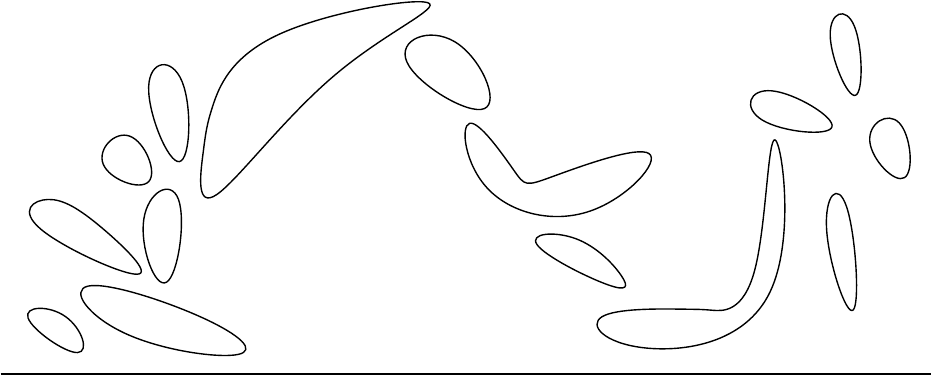}
\caption{Loops generated by $W^{(\beta)}$.}
\end{subfigure}
\bigbreak
\begin{subfigure}[b]{0.45\textwidth}
\centering
\includegraphics[width=\textwidth]{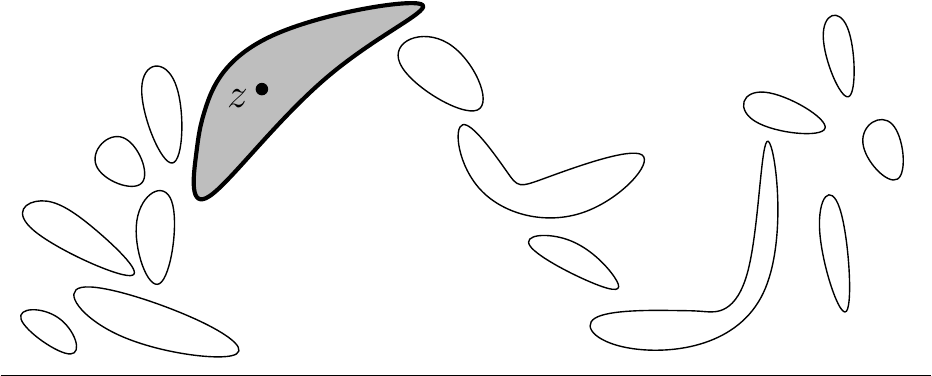}
\caption{$z$ is contained in a loop.}
\end{subfigure}$\quad\quad$
\begin{subfigure}[b]{0.45\textwidth}
\centering
\includegraphics[width=\textwidth]{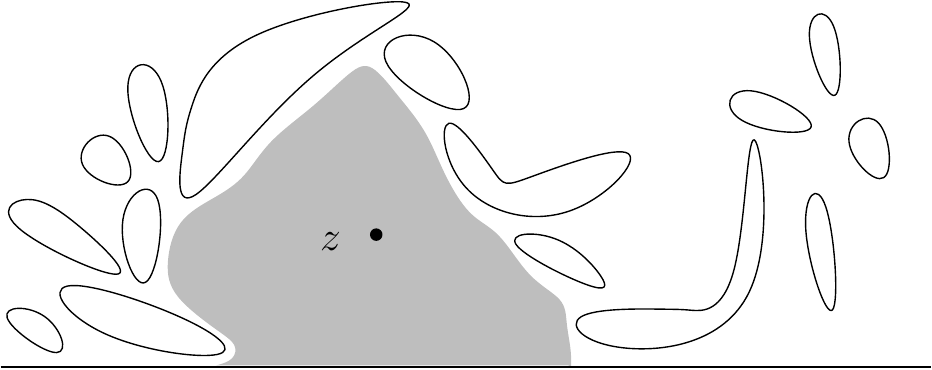}
\caption{The connected component containing $z$.}
\end{subfigure}
\end{center}
\caption{\label{fig::loop_cc_containing_z} Two scenarios for $z$.}
\end{figure}

We now need to control what happens when $r \to 0$. A first important observation is that, as argued in the previous sections, the function
$W^{(\beta, r)}$ converges uniformly towards $W^{(\beta)}$ on any compact interval. On the other hand, we have just described a relation between the Loewner chain/the loops generated by $W^{(\beta, r)}$ and CLE$_\kappa$. Our goal is now to deduce a similar statement for the Loewner chain generated by $W^{(\beta)}$. For that chain, let $H_t^{(\beta)}$ denote the complement of the chain at time $t$ (using the usual notations, and the time-parametrization of $W^{(\beta)}$). 

For simplicity, let us first focus only on the law of the loop that surrounds one fixed interior point $z\in\HH,$ in a CLE$_{\kappa}^{\beta} (0)$. Two scenarios can occur. Either for the Loewner chain (without branching, and targeting infinity) generated by $W^{(\beta)}$, $z$ is at some point swallowed by a quasi-loop (that we then call $\gamma^{(\beta)} (z)$) or not. Let $A_1^{(\beta)}(z)$ and $A_2^{(\beta)}(z)$ denote these two events. If $A_2^{(\beta)}(z)$ holds, then let 
$\tau^{(\beta)}(z) = \sup \{ t > 0 \ : \ z \in H_t^{(\beta)} \}$. 
 
When $A_1^{(\beta)}(z)$ holds, the quasi-loop $\gamma^{(\beta)}(z)$ corresponds to a time-interval $(t_-(e), t_+(e))$ for some excursion $e$, that in turn corresponds (for each $r$) to
some loop $\gamma^{(\beta, r, *)}$ traced by the Loewner chain driven by $W^{(\beta, r)}$. The almost sure convergence of $W^{(\beta, r)}$ to $W^{(\beta)}$ implies that (with full probability on the event that
$\gamma^{(\beta)}(z)$ exists) in Carath\'eodory topology seen from $z$, the inside of the loop corresponding to $\gamma^{(\beta, r, *)}$ converges to the inside of $\gamma^{(\beta)}(z)$ as $r \to 0$ (note that the loops $\gamma^{(\beta, r, *)}$ will surround $z$ for all small enough $r$).

It is a little more tricky to handle the case where $A_2^{(\beta)} (z)$ holds.
Note that then $\tau^{(\beta)} (z)$ cannot belong to some $(t_- (e), t_+ (e)]$ (because this interval corresponds to a quasi-loop that is locally drawn like a simple slit).
Furthermore, in this case, for any time $t$ before
$\tau^{(\beta)}(z)$, $z$ belongs to the still to be explored unbounded simply connected domain $H_t^{(\beta)}$. Target-independence of the SLE($\kappa, \kappa-6$) processes yields that modulo
reparametrization, one can view the evolution of the Loewner chain  before $\tau^{(\beta)}(z)$ as a radial SLE($\kappa, \kappa-6$) targeting $z$ as described before. 
 In particular, this implies that the conformal radius of $H_t^{(\beta)}$ seen from $z$ decreases continuously to some limit when $ t$ increases to $ \tau^{(\beta)} (z)$. This limit is the conformal radius of the simply connected domain that we denote by $\Omega^{(\beta)} (z)$ and that is the decreasing limit (seen from $z$ in Carath\'eodory topology) of $H_t^{(\beta)}$ as $t$ increases to $\tau^{(\beta)}(z)$ 
(note that we have not excluded the -- somewhat unlikely -- case where $\tau^{(\beta)} (z) = \infty$ when $A_2^{(\beta)} (z)$ holds).

In the next couple of paragraphs, since $z$ is fixed, we will omit to mention the dependence of $\tau^{(\beta)}(z)$, $\Omega^{(\beta)}(z)$ etc on $z$, and write $\tau^{(\beta)}$, $\Omega^{(\beta)}$ instead.

In the same way, we can define almost surely, for each $r$, $\gamma^{(\beta,r)} = \gamma^{(\beta, r)}(z)$ or $\Omega^{(\beta, r)} = \Omega^{(\beta, r)} (z)$ depending on whether
the Loewner chain drawn by $W^{(\beta,r)}$ creates a quasi-loop around $z$ or not (here, we use the target-independence of the symmetric SLE($\kappa, \kappa-6$)). 
One problem to circumvent is that the fact that $W^{(\beta, r)}$ converges uniformly to $W^{(\beta)}$ on any compact interval is not enough to ensure that
$\Omega^{(\beta, r)}$ converges to $\Omega^{(\beta)}$, for instance because just before the disconnection time, a very small fluctuation of the driving function can create a path that  enters and exits this soon-to-be-cut-off domain. An additional problem is that it could be that, even for small $r$, $\tau^{(\beta,r)} (z)$ is much larger than $\tau^{(\beta)}(z)$.

One way around this is to introduce additional stopping times that approximate $\tau^{(\beta)}$ from below. For instance, for each $\epsilon$,
define $\hat \tau_{\epsilon}^{(\beta)} $ to be the first time $t$ at which it is 
possible to disconnect  $z$ from infinity in $H_t$ by removing from $H_t$ a ball of radius $\epsilon$. Then, let $\tilde  \tau_{\epsilon}^{(\beta)} = \min ( 1/ \epsilon , \hat  \tau_{\epsilon}^{(\beta)} )$ (this is just to take care of the possibility that $\tau^{(\beta)} = \infty$).  Then finally, if this time belongs to some quasi-loop interval $(t_- (e), t_+(e))$, define $\tau_{\epsilon}^{(\beta)}$ to be the end-time $t_+ (e)$ of this quasi-loop, and otherwise let
$ \tau_{\epsilon}^{(\beta)} = \tilde \tau_{\epsilon}^{(\beta)} $. Note that $\tau_\eps^{(\beta)}$ is a stopping time with respect to the filtration of $W^{(\beta)}$.

Clearly, for any small $\epsilon$,  $\tau_{\epsilon}^{(\beta)}  < \tau^{(\beta)}$ as soon as $\tau^{(\beta)}$ is finite (recall that $\tau^{(\beta)} $ cannot be equal to some $t_+(e)$).
On the other hand, $\tau_{\epsilon}^{(\beta)}$ cannot increase to anything else than $\tau^{(\beta)} $ as $\epsilon$ decreases to $0$.

Furthermore, the convergence of $W^{(\beta, r)}$ to $W^{(\beta)}$ ensures that for each given $\epsilon$, the domain $H_{\tau_\epsilon^{(\beta)}}^{(\beta, r)}$
converges in Carath\'eodory topology (seen from $z$) to $H_{\tau_\epsilon^{(\beta)}}^{(\beta)}$ when $r \to 0$.
Hence, we conclude that for a well-chosen $r(\epsilon)$ (small enough), as $\epsilon \to 0$ along some well-chosen sequence, almost surely on the even $\tau^{(\beta)}  < \infty$,  the sequence of domains $H_{\tau_\epsilon^{(\beta)}}^{(\beta, r(\epsilon))}$ converges in Carath\'eodory topology to $\Omega^{(\beta)} $.

For each given $\epsilon$, choose $r(\epsilon)$ very small,   we can construct a loop surrounding $z$ as follows, using the Loewner chain associated to $W^{(\beta, r)}$.
If this chain has traced a loop $\gamma^{(\beta,r, \epsilon)}$ that surrounds $z$ along the way before
$\tau^{(\beta)}_\epsilon $, then just keep this loop.
If not, then sample in the
domain $H_{\tau_\epsilon^{(\beta)}}^{(\beta, r(\epsilon))}$ an independent CLE$_\kappa$,  look at the loop that surrounds $z$ in this sample, and call it $\gamma^{(\beta, r, \epsilon)}$. Our previous arguments relating the loops traced by $W^{(\beta, r)}$ to CLE$_\kappa$ (note that $\tau_\epsilon^{(\beta)}$ is a stopping time at which all these Loewner chains  complete a quasi-loop) show that for each fixed $\epsilon$ and $r$, the law of
$\gamma^{(\beta, r, \epsilon)}$ is the same as the law of the loop that surrounds $z$ in an CLE$_\kappa$ in $\HH$.

Note finally that in the case $A_1^{(\beta)}$ where a quasi-loop $\gamma^{(\beta)}$ is discovered by the Loewner chain driven by $W^{(\beta)}$, then (as we have argued before) the same holds true
for the Loewner chain driven by $W^{(\beta, r)}$ for all small $r$, and that furthermore, for all small $\epsilon$ and $r$, $\gamma^{(\beta,r, \epsilon)} = \gamma^{(\beta, r)}$.

Hence, we can conclude that for a well-chosen sequence $\epsilon_n \to 0$ (with a well-chosen $r(\epsilon_n)$),  in the limit when $n \to \infty$, the constructed loop (which has always the law of $\gamma(z)$ in a CLE$_\kappa$ in $\HH$ as we have just argued) is:
\begin {itemize}
 \item Either $\gamma^{(\beta)}$ if $A_1^{(\beta)}$ holds
 \item Or otherwise, obtained by sampling a CLE$_\kappa$ in $\Omega^{(\beta)}$.
\end {itemize}
But now, because we know that this procedure defines a loop that is distributed like $\gamma(z)$ is a CLE$_\kappa$, one can use the branching idea, and iterate this result within $\Omega^{(\beta)}$. We then readily get that the law of the quasi-loop that surrounds $z$ in a CLE$_{\kappa}^{(\beta)}(0)$ is identical to the law of the loop that surrounds $z$ for a CLE$_\kappa$.
\medbreak

In fact, in order to describe the law of CLE$_\kappa^{(\beta)}(0)$, we have to see what happens when one looks at the joint distribution of the loops that surround $n$ given points $z_1, \ldots, z_n$. The argument is
almost identical: The main difference is that one first replaces $\tau^{(\beta)} (z)$ by $\tau^{(\beta)} := \tau^{(\beta)} (z_1, \ldots, z_n) = \min ( \tau^{(\beta)} (z_1), \ldots,
\tau^{(\beta)} (z_n))$ and also changes $\tau_\epsilon^{(\beta)} (z)$ into
$\tau_\epsilon^{(\beta)} := \tau_\epsilon^{(\beta)} (z_1, \ldots, z_n )$ which is associated to the first time at which adding some ball of radius $\epsilon$ to the Loewner chain
 disconnects at least one of these $n$ points from infinity in $H_t^{(\beta)}$. The rest of the procedure is basically unchanged and readily leads to the fact that the joint distribution of the set of loops that surround these points in a CLE$_\kappa^{(\beta)} (x)$ or in a CLE$_\kappa$ are identical, which completes the proof of Proposition \ref {eqvincle}. The only additional ingredient is to note that when one considers a decreasing sequence of open simply connected sets $\Omega_n$, the law of the CLE$_\kappa$ in $\Omega_n$ (as described via its finite-dimensional marginals) converges to the
 law obtained by sampling independent CLE$_\kappa$'s in the different connected components of the interior of $\cap \Omega_n$ (which is a fact that follows for instance directly from the description of  CLE$_\kappa$ as loop-soup cluster boundaries). This ensures that in the limit when $n \to \infty$ (for well-chosen $\eps_n$ and $r_n$), on the event $\tau^{(\beta)} (z_1, \ldots, z_n) < \infty$, 
 sampling a CLE$_\kappa$ in $H_{\tau_{\eps_n}^{(\beta)}}^{(\beta, r_n)}$ converges in law (in the sense of finite-dimensional distributions) to sampling two independent CLE$_\kappa$'s in 
 $H_{\tau^{(\beta)}}^{(\beta)}$ and in the cut-out component $\Omega^{(\beta)}$ at time $\tau^{(\beta)}$.

\medbreak

For $\kappa=4$ and $\mu\in\mathbb{R}$. In the same spirit, we choose the driving function
$$W_t^{<\mu,r>}:= W_t + 4 \mu r N_r (t) =  2B_t+ 4(I_t^{(0)}+\mu r N_r(t)).$$
 Then, the very same arguments as before show that the corresponding constructed loops are those of a CLE.
And on the other hand, the driving function $W^{<\mu,r>}$ converges to $W^{<\mu>}$. This allows to  complete the proof of the following fact:

\begin {proposition}
\label {eqvincle2}
 When $\kappa=4$, the law of CLE$_{4, \mu}(x)$ does depend neither on $x$ nor on $\mu$.
\end {proposition}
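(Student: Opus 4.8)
The plan is to run the argument of Proposition~\ref{eqvincle} essentially verbatim, with two simplifications special to the boundary value $\delta=1$. As there, CLE$_{4,0}(x)$ is already known from \cite{CLE1} to be independent of $x$ and to consist of genuine loops, and I will call it simply the CLE$_4$; by conformal invariance it suffices to work in $\HH$ and to match, for any finite collection $z_1,\ldots,z_n$, the joint law of the surrounding loops produced by CLE$_{4,\mu}(0)$ and by CLE$_4$. The first simplification is that here the symmetric driving function $W=W^{<0>}=2B_t+4I_t^{(0)}$ is built directly from the symmetric process $X^{(0)}$, so no excursion signs are ever altered; consequently Observations~(1) and~(2) play no role and only Observation~(3) is invoked. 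The second is that the shift to be installed is no longer a principal-value compensation but the classical excursion-counting approximation $\mu r N_r(t)\to\mu\ell_t$ of Brownian local time recalled before the statement, which is precisely what guarantees that $W^{<\mu,r>}\to W^{<\mu>}$ uniformly on compact time-intervals.

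With these in hand, the discretized driving function $W_t^{<\mu,r>}=W_t+4\mu r N_r(t)$ differs from $W$ only by a jump of size $4\mu r$ inserted at each end-time $T_n(r)$ of an excursion of length at least $r^2$. At any such time $X_{T_n}=0$, hence $W_{T_n}=O_{T_n}$, so Observation~(3) applies directly: the loops traced by the chain driven by $W^{<\mu,r>}$, completed by independent CLE$_4$ samples in every component cut out along the way, assemble into a genuine CLE$_4$. The remaining task is the passage $r\to 0$, which I would carry out exactly as for Proposition~\ref{eqvincle}. Fixing $z_1,\ldots,z_n$, I split according to whether the chain driven by $W^{<\mu>}$ discovers a quasi-loop around some $z_j$ (event $A_1$) or disconnects the points without encircling them (event $A_2$). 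On $A_1$ the Carath\'eodory convergence of the encircling loop is immediate from $W^{<\mu,r>}\to W^{<\mu>}$; on $A_2$ I introduce the stopping times $\tau_\epsilon^{<\mu>}$ approximating $\tau^{<\mu>}:=\min_j\tau^{<\mu>}(z_j)$ from below --- detecting disconnection by a removed ball of radius $\epsilon$ and rounding up to the next quasi-loop end-time --- so that for a suitably chosen $r(\epsilon)\to 0$ the domains $H_{\tau_\epsilon^{<\mu>}}^{<\mu,r(\epsilon)>}$ converge in Carath\'eodory topology (seen from the $z_j$) to the cut-out components. Since each $\tau_\epsilon^{<\mu>}$ is a stopping time at which a quasi-loop has just been completed, the loop configuration around the $z_j$ there is distributed as in CLE$_4$; iterating this identity inside the cut-out domains and invoking the continuity of the finite-dimensional marginals of CLE$_4$ under decreasing limits of simply connected domains then yields the claim (and, as in the remark following Proposition~\ref{eqvincle}, shows a posteriori that all quasi-loops are genuine loops).

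I expect the only genuine obstacle to be the one already flagged for $\kappa<4$: the uniform convergence of the driving functions does not by itself control the cut-out domain $\Omega^{<\mu>}$ at the disconnection time, since an arbitrarily small fluctuation of the driving function just before that time could let the path enter and leave the region about to be severed. The approximating stopping times $\tau_\epsilon^{<\mu>}$ are exactly the device that circumvents this, and the points needing care are the verifications that $\tau_\epsilon^{<\mu>}\uparrow\tau^{<\mu>}$ and that $\tau^{<\mu>}$ never falls inside a quasi-loop interval $(t_-(e),t_+(e)]$. Both persist at $\delta=1$, since the local description of the chain during an excursion as a simple slit is unchanged at the boundary value.
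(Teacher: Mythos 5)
Your proposal is correct and takes essentially the same route as the paper: the paper's (very brief) proof likewise takes $W_t^{<\mu,r>} = W_t + 4\mu r N_r(t)$, notes that "the very same arguments as before" (in effect, only the constant-jump observation, since no excursion signs are altered at $\delta=1$) identify the loops traced for each fixed $r$ as CLE$_4$ loops, and then passes to the limit $r\to 0$ using the convergence $W^{<\mu,r>}\to W^{<\mu>}$ exactly as in the proof of Proposition \ref{eqvincle}. Your spelled-out version of the $r\to 0$ step (the $A_1$/$A_2$ dichotomy, the stopping times $\tau_\epsilon$, and the Carath\'eodory-convergence issue at disconnection times) is precisely what the paper is implicitly importing from the $\kappa<4$ case.
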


\section{The uniform exploration of CLE$_4$}

Let us now modify  the ``symmetric'' Loewner driving function $W^{(0)}$ by introducing some random jumps. Basically, at each time $T_n (r)$ (the end-times of the excursions of $X$ of time-length at least $r^2$), we decide to resample the position of the driving function  according to the uniform density in $[-m, m]$  -- here $m$ should be thought of as very large,  we will then let it go to infinity.

Suppose for a while that $m$ is fixed (we will omit to mention the dependence in $m$ during the next paragraphs in order to avoid heavy notation).
Let us associate to each excursion $e$ of $X$ a random variable $\xi (e)$ with this uniform distribution on $[-m,m]$, in such a way that conditionally on $X$, all these variables $\xi$ are i.i.d. (for notational simplicity, we sometimes also write $\xi=\xi (T)$ when $T$ is the time of $X$ at which the corresponding excursion is finished).
Then, we define the function $t \mapsto \hat W_t^{r}$ as follows: $T_0(r)= 0$ and  for each $n \ge 0$,
\begin {itemize}
\item $\hat W^r (T_n) = \xi (T_{n+1})$.
\item $\hat W^r - W^{(0)}$ is constant on each interval $[T_n (r), T_{n+1} (r))$.
\end {itemize}
The function $\hat W^r$ is piecewise continuous, and it is therefore the driving function of some Loewner chain.
The very same arguments as before show that for each given $r$, it
defines a family of loops distributed like loops in a CLE$_\kappa$ (in the sense that, just as before, when one completes the picture with independent CLE's in the not-yet-filled parts, one obtains a CLE$_\kappa$ sample  (note that the jump distribution -- i.e. the choice of the new point according to the uniform distribution --  is in fact independent of the future behavior of $X$).

It is easy to understand what happens to this construction when $r$ tends to $0$. As before, we are going to look at the almost sure behavior of $\hat W^{r}$ when $r \to 0$, for a given sample of $W^{(0)}$ and $\xi$'s . Let us define the process $\hat W$ by the fact that for each excursion $e$ corresponding to a time-interval $(S,T)$,
$$ \hat W_t = ( W_t^{(0)} - W_S^{(0)} ) + \xi (T) $$
for $t \in [S,T)$ (this defines $\hat W$ for all $t$, except on the zero-Lebesgue measure set of times that are not in the time-span of some excursion, for those times, we can choose $\hat W$ as we wish).
Then, clearly,
the fact that $t \mapsto W^{(0)}_t$ is continuous ensures that for each given excursion interval, $\hat W^{r}$ converges uniformly to $\hat W$ as $r \to 0$ on this time-interval, because for small enough $r$, $\hat W^r = \hat W$ on this excursion.
It follows readily that the Loewner chain generated by $\hat W^r$ does (almost surely) converge (in Carath\'eodory topology) to the one generated by $\hat W$.

Hence, using the same arguments as above (the law of the traced loops is always that of loops in CLE, that are simple disjoint loops, the excursion-intervals correspond to the loops, and these intervals are the same for all $r$), we conclude that during each excursion time-interval, the driving process $\hat W$ does indeed trace a loop, and that the joint law of all these loops are those of loops in a CLE.

Let us now rephrase all the above construction in terms of the Poisson point process of Bessel excursions $(e_u, u \ge 0)$. As we have explained earlier, each Bessel excursion in fact corresponds (via Loewner's equation) to a two-dimensional loop in the upper-half plane, that touches the boundary only at the origin. Let us call $\gamma_u$ the loop
corresponding to $e_u$. To each excursion $e_u$ of the Bessel process, we also associate a random position $x_u \in \R$ sampled according to the uniform measure on $[-m,m]$
(more precisely, conditionally on all Bessel excursions $(e_{u_j})$, the random variables $(x_{u_j})$ are i.i.d. with this distribution).
Then, we define the loop $\hat \gamma_u$ by shifting $\gamma_u$ horizontally by $x_u$ (and so, the loop $\hat \gamma_u$ touches the real axis at $x_u$).

For each excursion $e_u$, we can now define the conformal transformation $\hat f_u$ from the connected component of $\HH \setminus \hat \gamma_u$ that contains $i$ onto $\HH$ such that
$\hat f_u(i)=i$ and $\hat f_u'(i) \in \R_+$. As this will be useful, we now reintroduce the dependence on $m$ in the notation for these maps (and write $\hat f_u = \hat f_u^m$).

For a given $m$, we start with a Poisson point process $(e_u, \ge 0)$ of Bessel excursions defined under the measure $2m \lambda$ and we then associate to each excursion the
uniform random variable $\xi (e_u)$. A cleaner equivalent way to describe the process $((e_u, \xi_u), u \ge 0)$ is to say that it is a Poisson point process with intensity
$\lambda \otimes dx 1_{x \in (-m,m)}$.

Then, clearly, we get a Poisson point process $(\hat f_u^m, u\ge 0)$ of such conformal maps (because for each $u$, $\hat f_u^m$ is a deterministic function of the pair $(e_u, x_u)$ and $((e_u, x_u), u\ge 0)$ is a Poisson point process).

For each $u>0$, one can then define
$$\hat F_u^m = \circ_{v < u} \hat f_v^m$$
(where the composition is done in the order of appearance of the maps $\hat f_v$). Clearly, $\hat F_u^m $ corresponds to the Loewner map (generated by the driving function $\hat W$) at the time (in the Loewner time parametrization) corresponding to the completion of all loops $\hat{\gamma}_v^m$ for $v < u$. In other words, if $\tau (e_u)$ is the time-length of the excursion $e_u$, the Loewner time at which the loop corresponding to that excursion will start being traced is
$\sum_{v < u} \tau (e_v)$.

Hence, the loops
$$\tilde \gamma_u^m := (\hat{F}_u^m)^{-1} ( \hat \gamma_u^m )$$
are distributed like CLE loops.
In particular, the loop that contains $i$ will be the loop $(\hat{F}_{\tau}^m )^{-1} ( \hat{\gamma}^m_\tau )$ where
$$ \tau = \inf \{ u \ge 0 \ : \ \hat{\gamma}^m_u \hbox { surrounds } i \}.$$

Let us rephrase what we have done so far: For each $m$, we have seen that one can define  CLE loops by considering the Loewner chain generated by $\hat W$, using the Poisson point process $(\hat f_u^m, u \ge 0)$, or equivalently, via the Poisson point process $\hat \Gamma^m := (\hat \gamma_u^m, u \ge 0)$ with intensity measure
$$ M^m= \int_{-m}^m dx \mu^x$$
where $\mu^x$ denotes the measure on loops rooted at $x$ (like in \cite {CLE1}, we define this measure as the measure $\mu^0$  on loops in the upper half-plane generated via a Bessel excursion defined under $\lambda$, and shifted horizontally by $x$).

Now, let us describe what happens when $m \to \infty$. Suppose now that we consider the Poisson point process $\hat \Gamma:= (\hat \gamma_u, u \ge 0)$ with intensity
$$M:= \int_{\R} dx \mu^x$$ and the corresponding iterations of maps $\hat F_u$.
Even though this measure seems ``even more infinite'' than $M^m$, this iteration of conformal maps does not explode. This is due to the scaling properties of $\mu^x$ and to the fact that one normalizes always at $i$ (so that loops rooted far away do not contribute much the derivative at $i$) -- one can for instance justify this using Lemma \ref {confinvM} below.

Note also that if we keep only those loops in $\hat \Gamma$ that are rooted at a point in $[-m,m]$, we obtain a process with the same law as $\hat \Gamma^m$.
The key observation is now to see that when $m \to \infty$, each map $\hat F^m_u$ converges uniformly in any compact subdomain of the closed upper half-plane to $\hat F_u$.
This implies the following:

\begin {lemma}
 The loops
$\tilde \Gamma=  (\tilde{\gamma}_u:=\hat F_u^{-1} (\hat \gamma_u), u \le \tau)$
are also distributed like loops in a CLE.
\end {lemma}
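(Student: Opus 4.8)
The plan is to deduce the $m \to \infty$ statement from the already-established finite-$m$ result by an approximation argument, exactly parallel to how Proposition~\ref{eqvincle} was deduced from the finite-$r$ constructions. For each $m$, the loops $\tilde\gamma_u^m = (\hat F_u^m)^{-1}(\hat\gamma_u^m)$ are distributed like CLE$_\kappa$ loops (this is what the preceding paragraphs established). Since keeping only the loops of $\hat\Gamma$ rooted in $[-m,m]$ yields a process with the same law as $\hat\Gamma^m$, I can and will couple all the processes on a single Poisson point process $\hat\Gamma$ with intensity $M$, so that $\hat\Gamma^m$ is literally the restriction of $\hat\Gamma$ to roots in $[-m,m]$ and the maps $\hat f_u^m$ and $\hat f_u$ agree for every point $u$ whose root lies in $[-m,m]$. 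The goal is then to pass to the limit $m\to\infty$ in the finite-dimensional marginals, using the stated fact that $\hat F_u^m \to \hat F_u$ uniformly on compact subsets of $\overline{\HH}$.

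The key structural observation I would exploit is that, because the composition $\hat F_u = \circ_{v<u}\hat f_v$ does not explode (by the scaling properties of $\mu^x$ and the normalization at $i$, as justified via Lemma~\ref{confinvM}), the map $\hat F_u$ is a genuine conformal map for each $u\le\tau$, and the stopping index $\tau$ is unaffected by the value of $m$ as soon as $m$ is large enough: the loop $\hat\gamma_\tau$ that first surrounds $i$ is rooted at some fixed point, which lies in $[-m,m]$ for all large $m$. Thus for any finite collection of target points and for all sufficiently large $m$, the relevant loops $\hat\gamma_u$ contributing up to the disconnection of those points coincide for the $m$-truncated and the full process. First I would fix target points $z_1,\dots,z_n$ and the finitely many loops $\hat\gamma_{u_1},\dots,\hat\gamma_{u_k}$ of $\hat\Gamma$ that surround them or are relevant up to their disconnection from $i$; then I would apply the uniform convergence $\hat F_{u_j}^m \to \hat F_{u_j}$ to conclude that $\tilde\gamma_{u_j}^m = (\hat F_{u_j}^m)^{-1}(\hat\gamma_{u_j}^m) \to (\hat F_{u_j})^{-1}(\hat\gamma_{u_j}) = \tilde\gamma_{u_j}$ in the Carath\'eodory topology seen from the target point. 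Since each $\tilde\gamma_{u_j}^m$ has exactly the CLE$_\kappa$ marginal law for every $m$, the almost sure limit $\tilde\gamma_{u_j}$ must share that law, and the joint law of the finite family passes to the limit as well, giving that $\tilde\Gamma$ is distributed like a CLE$_\kappa$.

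The main obstacle I anticipate is controlling the convergence uniformly enough near the boundary and near the disconnection times, so that Carath\'eodory convergence of the inverse maps genuinely yields Carath\'eodory convergence of the images of the loops. Concretely, one must rule out the same pathology that arose in the proof of Proposition~\ref{eqvincle}: a small perturbation of the map just before a point is surrounded could in principle alter which loop catches the point, or create spurious near-touchings of the domain boundary. I would handle this by the same device used earlier, namely introducing stopping times $\tau_\epsilon$ that approximate the disconnection time $\tau$ from below (the first time one can disconnect the target from $i$ by adding a ball of radius $\epsilon$), verifying that $\hat F^m$ converges to $\hat F$ at these earlier, ``safe'' times where no delicate cutting is happening, and then letting $\epsilon\to 0$ along a well-chosen sequence with a matching sequence $m(\epsilon)\to\infty$. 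The remaining routine point is the continuity of the CLE$_\kappa$ law under decreasing sequences of domains in the finite-dimensional sense, which (as noted in the previous proof) follows from the loop-soup cluster description; invoking it closes the limiting step and completes the argument.
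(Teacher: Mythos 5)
Your proposal is correct and takes essentially the same route as the paper: the paper's own (very terse) justification of this lemma consists precisely of your two main ingredients, namely the coupling in which $\hat\Gamma^m$ is the restriction of $\hat\Gamma$ to loops rooted in $[-m,m]$ (so that the finite-$m$ CLE property applies), and the key observation that $\hat F_u^m$ converges uniformly on compact subsets of $\overline{\HH}$ to $\hat F_u$, whence the constant-in-$m$ law passes to the almost sure limit. Your additional safeguards (the stopping times $\tau_\epsilon$ near disconnection times and the continuity of the CLE law under decreasing domains) just spell out details that the paper leaves implicit in the phrase ``this implies the following.''
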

\begin{figure}[ht!]
\begin{center}
\begin{subfigure}[b]{0.7\textwidth}
\centering
\includegraphics[width=\textwidth]{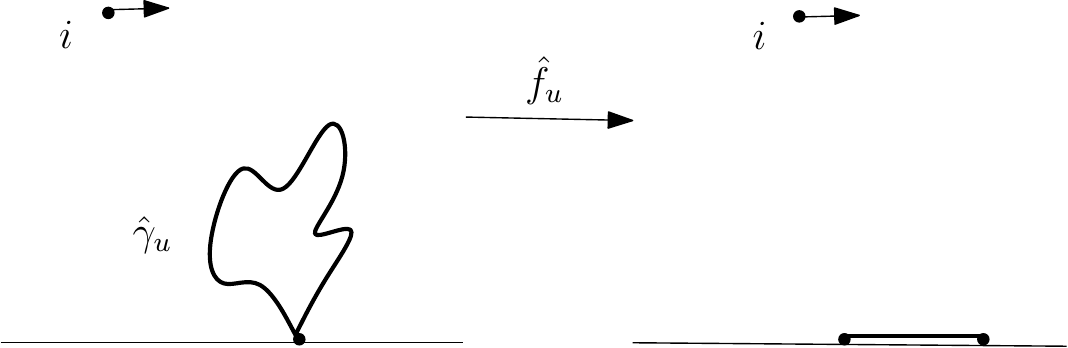}
\caption{$\hat{f}_u$ fixes $i$ and $\hat{f}'_u(i)>0$.}
\end{subfigure}\bigbreak
\begin{subfigure}[b]{0.7\textwidth}
\centering
\includegraphics[width=\textwidth]{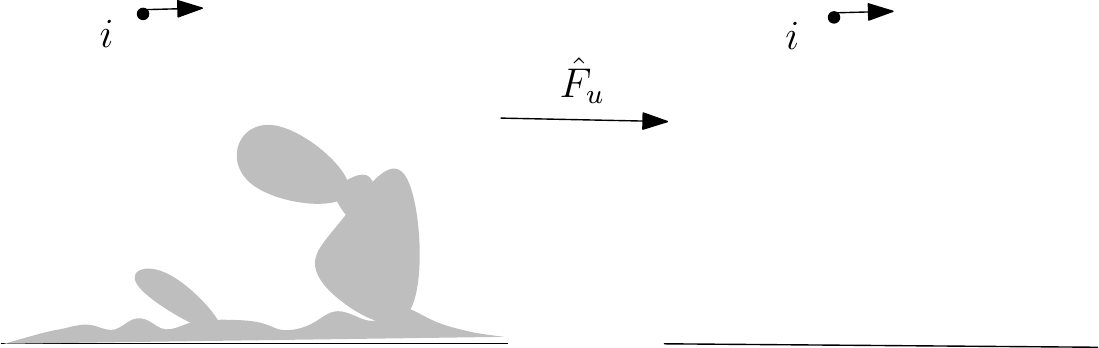}
\caption{$\hat{F}_u$ is the composition of $\hat{f}_v$ for $v<u$.}
\end{subfigure}\bigbreak
\begin{subfigure}[b]{0.7\textwidth}
\centering
\includegraphics[width=\textwidth]{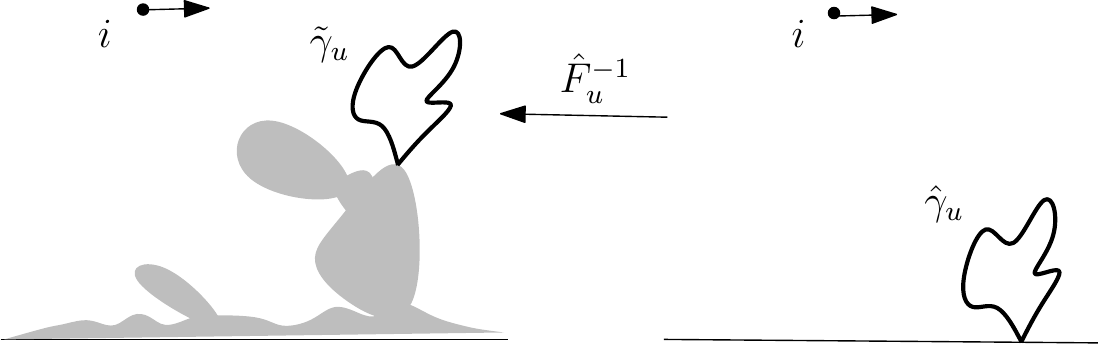}
\caption{$\tilde{\gamma}_u$ is the preimage of $\hat{\gamma}_u$ under $\hat{F}_u$.}
\end{subfigure}
\end{center}
\caption{\label{fig::preimage_bubble} construction of $\tilde{\gamma}_u$.}
\end{figure}

At this stage, everything we have said is still true if we replace the Lebesgue measure on $\R$ by (almost) any other given distribution on $\R$, and any
$\kappa \in (8/3, 4]$. An important reason to choose this particular measure and to focus on the case where $\kappa =4$ is that the following Lemma holds only in this case:
\begin{lemma}
\label {confinvM}
When $\kappa=4$ the measure $M$ is conformal invariant.
\end{lemma}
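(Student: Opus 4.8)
The plan is to verify that $\Phi_* M = M$ for every conformal automorphism $\Phi$ of $\HH$. Since the group of such automorphisms is $\mathrm{PSL}_2(\R)$, which is generated by the translations $z\mapsto z+b$, the dilations $z\mapsto\lambda z$ ($\lambda>0$) and the inversion $j:z\mapsto -1/z$ (indeed, using $ad-bc=1$ one has $(az+b)/(cz+d)=a/c-1/(c^2z+cd)$ whenever $c\neq 0$, so any Moebius map is a composition of a dilation, a translation, $j$, and another translation), it suffices to check invariance under these three families. The single input that makes everything work is the conformal covariance of the pinned loop measure, namely that for every such $\Phi$,
$$ \Phi_*\mu^x = |\Phi'(x)|^{2-\delta}\,\mu^{\Phi(x)},\qquad \delta=3-8/\kappa, $$
where $\mu^x$ is viewed as a measure on \emph{unparametrized} loops touching $\partial\HH$ only at $x$. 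The exponent $2-\delta=8/\kappa-1$ equals exactly $1$ precisely when $\kappa=4$, and it is this coincidence that produces the invariance.

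First I would record the scaling covariance, which is elementary and already pins down the exponent. The Bessel excursion measure satisfies $\lambda(E_x)=x^{\delta-2}$, so under the scaling map $S_\rho:e\mapsto \rho\,e(\cdot/\rho^2)$ on excursions (which preserves the Bessel law) one has $(S_\rho)_*\lambda=\rho^{2-\delta}\lambda$. On the other hand, feeding the scaled excursion into the Loewner equation multiplies the driving function as $\tilde w_t=\rho\,w_{t/\rho^2}$, i.e.\ it scales the generated loop $\gamma(e)$ by $\rho$. Writing $R_\rho$ for the scaling-by-$\rho$ map on loops, this gives $(R_\rho)_*\mu^0=\rho^{2-\delta}\mu^0$, and after conjugating with the horizontal shift, $(R_\rho)_*\mu^x=\rho^{2-\delta}\mu^{\rho x}$. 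Plugging this into $M=\int_\R dx\,\mu^x$ and substituting $y=\rho x$ yields $(R_\rho)_*M=\rho^{1-\delta}M$, which equals $M$ for all $\rho>0$ if and only if $\delta=1$, that is $\kappa=4$. Translation invariance is immediate, because $\mu^x$ is the horizontal shift of $\mu^0$ and the measure $dx$ is translation invariant.

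It then remains to treat the inversion $j(z)=-1/z$, and this is the step I expect to be the crux. Once the covariance above is available for $j$ (here $j'(x)=1/x^2>0$ and $j(x)=-1/x$, while the root $x=0$, the only point sent to $\infty$, is a single point and hence negligible for $\int dx$), one gets $j_*\mu^x=x^{-2}\mu^{-1/x}$, and the substitution $y=-1/x$ (so $dx=y^{-2}dy$ and $x^{-2}=y^2$) gives
$$ j_*M=\int_\R \frac{dx}{x^2}\,\mu^{-1/x}=\int_\R dy\,\mu^{y}=M. $$
Combining this with the translation and dilation invariances and the generation statement above, $M$ is invariant under all of $\mathrm{PSL}_2(\R)$, which is the asserted conformal invariance.

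The main obstacle is therefore establishing the covariance $\Phi_*\mu^x=|\Phi'(x)|^{2-\delta}\mu^{\Phi(x)}$ for genuinely non-affine $\Phi$: the scaling argument only delivers it for dilations, and translations are trivial. The substantive content is that the pinned loop measure, viewed as a measure on unparametrized loops, depends only on its root and transforms with a single boundary exponent there; in particular it is invariant under parabolic maps fixing the root, even though these act nontrivially near $\infty$. I would obtain this from the conformal covariance of the one-point pinned (SLE bubble) measure established in \cite{CLE1} (equivalently, from the restriction/conformal-covariance property of the SLE$_\kappa$ bubble measure), with the exponent forced to be $2-\delta$ by the scaling computation above. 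With that covariance in hand, the remainder is the change-of-variables bookkeeping displayed above, and the role of $\kappa=4$ is exactly that $2-\delta=1$ makes the Jacobian factors cancel.
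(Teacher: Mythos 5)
Your proof is correct and rests on exactly the same key input as the paper's: the conformal covariance $\Phi \circ \mu^x = |\Phi'(x)|\,\mu^{\Phi(x)}$ (your exponent $2-\delta$ equals $1$ at $\kappa=4$) imported from \cite{CLE1}, followed by the change of variables in $\int_\R dx\, \mu^x$. The reduction to translations, dilations and the inversion is harmless but unnecessary, since the cited covariance holds for all Moebius transformations of $\HH$ at once (which is how the paper concludes in one line); your Bessel-scaling computation of the dilation exponent is, however, a nice self-contained way of seeing why $\kappa=4$ is the only case where invariance can hold.
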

\begin{proof}
Recall from \cite {CLE1} that when $\kappa =4$ and if $\Phi$ is a conformal transformation of the half-plane onto itself,
$$ \Phi \circ \mu^x  = | \Phi' ( x) | \mu^{\Phi(x)}$$
where the measure $ \Phi \circ \mu^x$ is defined by
$$ \Phi \circ \mu^x (A)  = \mu^x \{ \gamma \ : \ \Phi (\gamma) \in A \}.$$
Hence, it follows immediately that $ \Phi \circ M = M $.
\end{proof}

A direct consequence of this conformal invariance is that
\begin {corollary}
When $\kappa=4$, the law of $\tilde \Gamma= (\tilde \gamma_u, u \le \tau)$ is invariant under any Moebius transformation $\Phi$ of the upper half-plane that preserves $i$.
\end {corollary}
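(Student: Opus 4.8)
The plan is to show that the deterministic map from the Poisson point process $\hat\Gamma$ of rooted loops to the family $\tilde\Gamma$ is \emph{equivariant} under any Moebius transformation $\Phi$ of $\HH$ fixing $i$, and then to combine this with the conformal invariance of the intensity $M$ supplied by Lemma \ref{confinvM}. Write $\Phi\cdot\hat\Gamma:=(\Phi(\hat\gamma_u),u\ge 0)$. Since $\hat\Gamma$ is a Poisson point process with intensity $M$, its image $\Phi\cdot\hat\Gamma$ is a Poisson point process with intensity $\Phi\circ M$, which equals $M$ by Lemma \ref{confinvM}; hence $\Phi\cdot\hat\Gamma$ has the same law as $\hat\Gamma$. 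If I can show that the construction $\hat\Gamma\mapsto\tilde\Gamma$ commutes with the action of $\Phi$, i.e. that running it on $\Phi\cdot\hat\Gamma$ produces exactly $(\Phi(\tilde\gamma_u),u\le\tau)$, then the two laws agree and the corollary follows at once.

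The heart of the matter is this equivariance, which rests on a conjugation identity for the normalizing maps. For the loop $\Phi(\hat\gamma_u)$, the connected component of $\HH\setminus\Phi(\hat\gamma_u)$ containing $i$ is the $\Phi$-image of the component of $\HH\setminus\hat\gamma_u$ containing $i$, because $\Phi$ is a homeomorphism of $\HH$ fixing $i$. I would then verify that the associated normalized map is exactly $\Phi\circ\hat f_u\circ\Phi^{-1}$: this map fixes $i$, and its derivative there equals $\Phi'(i)\,\hat f_u'(i)\,(\Phi^{-1})'(i)=\hat f_u'(i)>0$, where the factors $\Phi'(i)$ and $(\Phi^{-1})'(i)=1/\Phi'(i)$ cancel. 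It is precisely this automatic cancellation of the phase $\Phi'(i)$ that makes conjugation the correct operation and that uses in an essential way the hypothesis that $\Phi$ fixes the base point $i$.

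Next I would exploit that conjugation telescopes under composition: since each factor transforms as $\hat f_v\mapsto\Phi\circ\hat f_v\circ\Phi^{-1}$, the accumulated map becomes $\hat F_u\mapsto\Phi\circ\hat F_u\circ\Phi^{-1}$, the adjacent $\Phi^{-1}\circ\Phi$ pairs cancelling regardless of the order in which the maps are composed. Consequently the preimage loops transform as $(\Phi\circ\hat F_u\circ\Phi^{-1})^{-1}\big(\Phi(\hat\gamma_u)\big)=\Phi\big(\hat F_u^{-1}(\hat\gamma_u)\big)=\Phi(\tilde\gamma_u)$, which is the desired equivariance. I would also record that the truncation time is unaffected: $\hat\gamma_u$ surrounds $i$ if and only if $\Phi(\hat\gamma_u)$ surrounds $\Phi(i)=i$, so $\tau$ is the same functional of $\hat\Gamma$ and of $\Phi\cdot\hat\Gamma$, and stopping at $\tau$ therefore commutes with the $\Phi$-action.

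Combining the two ingredients, the family $(\Phi(\tilde\gamma_u),u\le\tau)$ is obtained by running the deterministic construction on $\Phi\cdot\hat\Gamma$, and this has the same law as $\tilde\Gamma=(\tilde\gamma_u,u\le\tau)$ since $\Phi\cdot\hat\Gamma\stackrel{d}{=}\hat\Gamma$. The main obstacle is the equivariance step, and within it the derivative bookkeeping for the normalization; once one observes that the positive-real-derivative normalization is preserved automatically by conjugation because the phases cancel exactly, the telescoping and the remaining identifications are routine. The well-definedness and convergence of the construction, together with the fact that $\tilde\Gamma$ consists of genuine CLE loops, may be taken from the preceding lemma.
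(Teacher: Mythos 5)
Your proposal is correct and follows essentially the same route as the paper's own proof: map the Poisson point process by $\Phi$, invoke Lemma \ref{confinvM} to see the intensity $M$ is preserved, identify the normalized maps of the image loops as the conjugates $\Phi\circ\hat f_u\circ\Phi^{-1}$, telescope through the compositions $\hat F_u$, and conclude equality in law. Your explicit verification of the derivative cancellation at $i$ and of the invariance of the stopping time $\tau$ only spells out details the paper leaves as ``easy to see.''
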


Note that there is no time-change involved. The law of $\Phi ( \tilde \gamma_u) 1_{u \le \tau}$ and $\tilde \gamma_u 1_{u \le \tau}$ are for instance identical.

\begin {proof}
Let $\Phi$ be a Moebius transformation of the upper half-plane that preserves $i,$ and $(\hat{\gamma}_u, u\ge 0)$ be a Poisson point process with intensity $M.$ And define $\tau, \hat{f}_u$ for $u<\tau,$ and $\hat{F}_u, \tilde{\gamma}_u$ for $u\le \tau$ as described above.

Note that $(\bar{\gamma}_u:=\Phi(\hat{\gamma}_u), u\ge 0)$ is a Poisson point process with intensity $M= \Phi\circ M$, and it has therefore the same distribution as
 $(\hat{\gamma}_u, u\ge 0)$.
 For $u<\tau,$ let $\bar{f}_u$ be the conformal map from the connected component of $\HH\setminus\bar{\gamma}_u$ that contains $i$ onto $\HH$ such that $\bar{f}_u(i)=i$ and $\bar{f}'_u(i)\in\R_+.$ It is easy to see that
$$\bar{f}_u=\Phi\circ\hat{f}_u\circ\Phi^{-1}$$
and hence for $u\le\tau,$
$$\bar{F}_u:=\circ_{v<u}\bar{f}_v=\Phi\circ\hat{F}_u\circ\Phi^{-1}.$$
As a result, for $u\le\tau,$
$$\Phi(\tilde{\gamma}_u)=\Phi(\hat{F}_u(\hat{\gamma}_u))=\bar{F}_u(\bar{\gamma}_u).$$
Since $(\bar{\gamma}_u, u\ge 0)$ has the same distribution as $(\hat{\gamma}_u, u\ge 0)$, it follows that $(\Phi(\tilde{\gamma}_u), u\le\tau)$ has the same distribution as $(\tilde{\gamma}_u, u\le\tau).$
\end{proof}

In fact, a stronger result holds. Let us now choose some other point $z$ than $i$ in the upper half-plane.
Let $\sigma$ denote
the first moment if it exists at which the process  $(\tilde \gamma_u, u \le\tau)$ disconnects $i$ from $z$. If the loop $\tilde{\gamma}_{\tau}$ surrounds both $i$ and $z$, we simply set $\sigma=\tau$.  Note that the event that $\sigma < \tau$  can happen when the process discovers a loop surrounding one of the two points and not the other, but at this stage, it is not excluded that it can disconnect two points strictly before discovering the loops that surround them, just like the symmetric SLE($\kappa, \kappa-6$) does, see \cite {CLE1}.

 Define the same process $(\tilde{\gamma}^z_u, u\le \tau^z)$ as above, except that we choose to normalize ``at $z$'' instead of normalizing at $i$.
One way to describe it would be to write  $\tilde \gamma^z_u = \varphi ( \tilde \gamma_u)$, where $\varphi$ is the affine transformation from $\HH$ onto itself such that $\varphi (i) =z$
(but we will use another way to describe it in terms of $\tilde \gamma_u$ in a moment). We then define $\sigma^z$ to be the first moment at which it disconnects $z$ from $i$ or discovers the loop that surrounds both points.

\begin {lemma}
 When $\kappa=4$, and for any $z\in\HH$, the law of $(\tilde \gamma_u^z, u \le \sigma^z)$ is identical to the law of $(\tilde \gamma_u, u \le \sigma)$.
\end {lemma}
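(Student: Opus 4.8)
The plan is to reduce this lemma, which compares the $z$-normalized exploration $(\tilde\gamma^z_u, u\le\sigma^z)$ to the $i$-normalized one $(\tilde\gamma_u, u\le\sigma)$, to the previous Corollary (Moebius invariance of the law of $(\tilde\gamma_u, u\le\tau)$ under maps preserving $i$). The key conceptual point is that the whole construction is conformally natural: both explorations are driven by the \emph{same} underlying Poisson point process $\hat\Gamma=(\hat\gamma_u, u\ge 0)$ with intensity $M$, and the only difference between them is the normalization point used to define the iterated conformal maps $\hat F_u$. Since $M$ is conformally invariant (Lemma \ref{confinvM}), moving the normalization point around should not change the law of the resulting loop process, \emph{provided one stops at a stopping time that is itself defined conformally invariantly} -- and $\sigma$, $\sigma^z$ are exactly such times, being the first moments at which $i$ and $z$ get disconnected (or jointly surrounded).

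\medbreak

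\textbf{First} I would make precise the relationship between the two normalizations. Pick any Moebius transformation $\Psi$ of the upper half-plane onto itself with $\Psi(i)=z$; applying the reasoning in the proof of the Corollary with $\Psi$ in place of $\Phi$, the process normalized at $z$ can be realized by conjugating everything by $\Psi$. Concretely, if $(\hat\gamma_u, u\ge 0)$ has intensity $M$, then $(\Psi(\hat\gamma_u), u\ge 0)$ also has intensity $M=\Psi\circ M$, and the $z$-normalized maps satisfy $\hat F_u^z = \Psi\circ \hat F_u \circ \Psi^{-1}$ where $\hat F_u$ is built by normalizing at $i$ \emph{from the process $(\Psi(\hat\gamma_u))$}. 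The loops come out as $\tilde\gamma_u^z = \Psi(\hat F_u^{-1}(\Psi^{-1}(\Psi(\hat\gamma_u)))) = \Psi(\tilde\gamma_u)$ exactly as in the Corollary. So $(\tilde\gamma^z_u, u\le\tau^z)$ has the same law as $(\Psi(\tilde\gamma_u), u\le\tau)$, and since $\Psi(i)=z$ the disconnection times match: $\sigma^z$ corresponds to the first time $\Psi(\tilde\gamma_\cdot)$ disconnects $z=\Psi(i)$ from $i$, i.e. to the first time $\tilde\gamma_\cdot$ disconnects $i$ from $\Psi^{-1}(i)$.

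\medbreak

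\textbf{Second}, the remaining point is that we are comparing the $i$-normalized process run until $\sigma$ (disconnection of $i$ from $z$) against the $z$-normalized process run until $\sigma^z$ (disconnection of $z$ from $i$). Here the symmetry of disconnection is essential: $i$ and $z$ are disconnected from each other at the same (conformally intrinsic) first instant no matter which point one normalizes at, because disconnection of two points in the exploration depends only on the traced loops $(\tilde\gamma_u)$ as a geometric object and not on the Loewner time-parametrization or normalization. I would argue that the stopping time $\sigma$ is a measurable functional of the \emph{unparametrized} loop collection up to the disconnection, so that conjugating by $\Psi$ (which sends the collection to an equally-distributed collection and exchanges the roles of $i$ and $z$ only through relabeling) sends $\sigma$ to $\sigma^z$ and $(\tilde\gamma_u, u\le\sigma)$ to a process with the law of $(\tilde\gamma^z_u, u\le\sigma^z)$. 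Applying the Corollary's invariance to reabsorb $\Psi$ then yields the claimed equality in law.

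\medbreak

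\textbf{The main obstacle} I anticipate is a subtle one: the Corollary only gives invariance of $(\tilde\gamma_u, u\le\tau)$ up to the time $\tau$ at which $i$ is surrounded, whereas here we must stop at the earlier, \emph{relative} time $\sigma\le\tau$, and moreover track a genuinely two-point event (disconnection of $i$ from $z$). The delicate step is to verify that stopping at $\sigma$ interacts cleanly with the conjugation by $\Psi$ -- that is, that the map $\Psi$ carries the event $\{\sigma=t\}$ defined via the $i$-normalized exploration to the corresponding event $\{\sigma^z=t\}$ for the $z$-normalized one, with matching time-parametrizations (the ``no time-change'' phenomenon noted after the Corollary). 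This requires checking that the Loewner time at which the $u$-th loop is traced, $\sum_{v<u}\tau(e_v)$, is unaffected by the choice of normalization point (it depends only on the half-plane capacities of the $\hat\gamma_v$, hence only on the excursions $e_v$, not on $i$ versus $z$), so that $\sigma$ and $\sigma^z$ correspond to the \emph{same} index $u$ in the common point process. Once this index-level identification is established, the equality of laws follows from the conformal invariance of $M$ exactly as in the Corollary, with no further analytic work.
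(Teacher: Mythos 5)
Your first step is fine: conjugating the whole construction by a fixed Moebius map $\Psi$ with $\Psi(i)=z$, and using $\Psi\circ M=M$, correctly identifies the law of the $z$-normalized process $(\tilde\gamma^z_u, u\le\tau^z)$ with that of $(\Psi(\tilde\gamma_u), u\le\tau)$; this is the same computation as in the Corollary's proof. But the heart of the lemma is what you yourself call the ``delicate step'', and your treatment of it is circular. If you build the $i$-normalized and the $z$-normalized explorations from the \emph{same} Poisson point process $(\hat\gamma_u)$, they do \emph{not} trace the same geometric loops: the traced loops are $\hat F_u^{-1}(\hat\gamma_u)$ and $(\hat F_u^z)^{-1}(\hat\gamma_u)$ respectively, and the accumulated maps $\hat F_u$ and $\hat F_u^z$ differ because they are normalized at different points (for instance $\tilde\gamma_u$ surrounds $i$ if and only if $\hat\gamma_u$ does, since $\hat F_u$ fixes $i$, while the $z$-normalized loop of index $u$ surrounds $i$ if and only if $\hat\gamma_u$ surrounds the \emph{moving} point $\hat F_u^z(i)$). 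Consequently your claims that ``disconnection of $i$ from $z$ depends only on the traced loops as a geometric object'' and that ``$\sigma$ and $\sigma^z$ correspond to the same index $u$ in the common point process'' are precisely the statement to be proved, not facts that follow from conformal naturality or from the capacity-times $\sum_{v<u}\tau(e_v)$ (these are indeed normalization-independent, but that is beside the point: the issue is \emph{which} loops get traced, not when). Likewise, the final appeal to the Corollary cannot ``reabsorb $\Psi$'': the Corollary only covers Moebius maps fixing $i$, and for the natural choice of $\Psi$ (the involution swapping $i$ and $z$) the invariance of the stopped process under $\Psi$ is equivalent to the lemma itself.

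The missing idea is to replace the fixed map $\Psi$ by an \emph{adapted} family of Moebius maps and to invoke the invariance of Poisson point processes under predictable, intensity-preserving transformations (the ``Result'' quoted from Bertoin). This is what the paper does: for $u<\sigma$ it sets $\Phi_u=G_u\circ\hat F_u^{-1}$, where $G_u$ is the conformal map from $\hat F_u^{-1}(\HH)$ onto $\HH$ normalized at $z$; each $\Phi_u$ is an $\mathfrak{F}_{u-}$-measurable Moebius transformation preserving $M$, so $(\bar\gamma_u:=\Phi_u(\hat\gamma_u), u\ge 0)$ is again a Poisson point process with intensity $M$. Building the $z$-normalized exploration from $(\bar\gamma_u)$ then yields \emph{pathwise} the same traced loops as the $i$-normalized exploration built from $(\hat\gamma_u)$ for all $u\le\sigma$; this exact coupling gives $\sigma\le\sigma^z$ almost surely, and the symmetry between $i$ and $z$ (there is a conformal map exchanging them) upgrades this to $\sigma=\sigma^z$, whence the equality of the stopped laws. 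No fixed-map conjugation can produce such a coupling, because the conformal change of normalization $\Phi_u$ genuinely evolves with $u$ as loops are discovered.
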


We shall use the following classical result about Poisson point process (see for instance \cite {bertoin}, Section 0.5):
\begin{result}
Let $(a_u, u\ge 0)$ be a Poisson point process with some intensity $\nu$ (defined in some metric space $A$). Let  $ \mathfrak{F}_{u-}=\sigma(a_v, v <u)$.  If $(\Phi_u,u\geq 0)$ is a process (with values on functions of $A$ onto $A$) such that for any $u\geq 0,$ $\Phi_u$ is $\mathfrak{F}_{u-}$-measurable, and that $\Phi_u$ preserves $\nu$ then $(\Phi_u(a_u),u \geq 0)$ is still a Poisson point process with intensity $\nu$.
\end{result}

We are now ready to prove the lemma.
\begin{proof}
Consider the process $(\tilde \gamma_u, u \le \tau)$ defined from
the Poisson point process $(\hat{\gamma}_u, u \ge 0)$ with intensity $M$ as above (and keep the same definitions for $\tau$, $\hat{f}_u$ with $u<\tau$, $\hat{F}_u, \tilde{\gamma}_u$ with $u\le\tau$, where the latter are defined by normalizing the maps at $i$).

 We denote  $ \mathfrak{F}_{u-}=\sigma(\hat{\gamma}_v, v <u)$.
For $u<\sigma,$ $\hat{F}_u^{-1}(\HH)$ is a simply connected domain of $\HH$ containing $z$. Let $G_u$ be the conformal map from $\hat{F}_u^{-1}(\HH)$ onto $\HH$ normalized at $z$ by
$G_u(z)=z$ and $G_u' (z) \in \R_+$. Define for each $u<\sigma$
$$\Phi_u=G_u\circ\hat{F}_u^{-1}.$$
We also define
$\Phi_\sigma=\lim_{u\to \sigma-}\Phi_u$, and we say that $\Phi_u$ is the identity map for all $u > \sigma$.
It is clear that, for each positive $u$, the map $\Phi_u$ is a $\mathfrak{F}_{u-}$-measurable Moebius transformation from the upper half-plane onto itself. Hence, the process $(\bar{\gamma}_u:=\Phi_u(\hat{\gamma}_u), u\ge 0)$ is also Poisson point process with intensity $M$.

If we use the point process $(\bar \gamma_u)$ to construct the process $(\tilde \gamma^z_u)$ normalized at $z$, we get a coupling of $(\tilde \gamma_u)$ and
$(\tilde \gamma_u^z)$ in such a way that they coincide up to time $\sigma$: For all $u < \sigma$, $\tilde \gamma_u = \tilde \gamma_u^z$, and in addition,
$$ \bar \gamma_\sigma = \Phi_\sigma ( \hat \gamma_\sigma )$$
(if this $\hat{\gamma}_\sigma$ exists)
so that $\tilde \gamma_\sigma = \tilde \gamma_\sigma^z$.

Hence, with this coupling, we see that $\sigma \le \sigma^z$ almost surely. By symmetry (because there exists a conformal map interchanging these two points), it follows that
$\sigma = \sigma^z$ almost surely.
\end{proof}

\medbreak

This means that it is possible to couple these two processes up to the first moment at which it disconnects $i$ from $z$. By scaling, this shows that for any pair of points $z$ and $z'$, we can couple the two processes $\tilde \gamma^z$ and $\tilde \gamma^{z'}$ up to the first time at which they disconnect $z$ from $z'$.
Hence, it is possible to couple the processes $\tilde \gamma^z$ for {\em all} $z \in \HH$ simultaneously in such a way that for any two points $z$ and $z'$, the previous statement holds.

If we now use such a coupling, we get a Markov process on domains $(D_u, u \ge 0)$: At time $u=0$, the domain is the upper half-plane, and at time $u > 0$, it is the union of all
the (disjoint) open sets corresponding to the evolution to all points $z$ at time $u$. Existence of such a conformally invariant process is a rather striking feature, as it uses no reference point, and the time of the evolution is preserved through the conformal transformation. We can therefore sum up the properties of this process as follows.

\begin {proposition}
\label {propo}
The process $(D_u, u \ge 0)$ provides a way to construct CLE$_4$. Furthermore,
the processes $(D_u, u \ge 0)$ and $(\Phi(D_u), u \ge 0)$ are identically distributed (with no time-change) for all Moebius transformations $\Phi$.
\end {proposition}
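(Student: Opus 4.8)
The plan is to assemble Proposition \ref{propo} from the pieces already established, since the two asserted properties correspond almost exactly to results proved just above. For the first claim, that $(D_u,u\ge0)$ provides a construction of CLE$_4$, I would argue as follows. For each fixed $z\in\HH$, the preceding Lemma (the one using the Poisson-point-process Result) shows that the process $(\tilde\gamma_u^z,u\le\sigma^z)$ has the same law as $(\tilde\gamma_u,u\le\sigma)$, and the Lemma before Lemma \ref{confinvM} already established that $(\tilde\gamma_u,u\le\tau)$ traces loops distributed like loops in a CLE$_4$. Thus along the evolution targeting any single $z$, the discovered loops are genuine CLE$_4$ loops, and in particular the loop $\tilde\gamma_\tau^z$ surrounding $z$ has the law of the CLE$_4$ loop around $z$. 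To recover the full CLE$_4$ I would invoke the coupling described in the paragraph preceding the proposition: the processes $\tilde\gamma^z$ for all $z$ are coupled so that $\tilde\gamma^z$ and $\tilde\gamma^{z'}$ agree up to the disconnection time $\sigma=\sigma^z=\sigma^{z'}$ of $z$ from $z'$, and are driven by the same underlying point process until that moment. This is exactly the target-independent / branching structure (analogous to the radial SLE$(\kappa,\kappa-6)$ coupling recalled in Section 2) that characterizes the finite-dimensional marginals of CLE$_4$: the joint law of the loops surrounding any finite family $z_1,\dots,z_n$ matches that of CLE$_4$, so the collection $(D_u)$ encodes precisely a CLE$_4$ sample.

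For the second claim, the conformal invariance with no time-change, the essential input is Lemma \ref{confinvM}, which states that $M=\int_\R dx\,\mu^x$ is conformally invariant when $\kappa=4$, together with the Corollary already deduced from it. I would first recall that the Corollary gives, for every Moebius transformation $\Phi$ preserving $i$, the equality in law of $(\Phi(\tilde\gamma_u),u\le\tau)$ and $(\tilde\gamma_u,u\le\tau)$, with no reparametrization of $u$. The point is now to remove the constraint that $\Phi$ fixes $i$, which is exactly what the simultaneous coupling of the $\tilde\gamma^z$ accomplishes: because the evolution targeting $z$ is obtained from that targeting $i$ by the affine map $\varphi$ sending $i$ to $z$ (and, more generally, because by scaling and the preceding lemma all the target-point versions share one coupling), the domain process $(D_u)$ is defined independently of any reference point. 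Concretely, for an arbitrary Moebius $\Phi$ of $\HH$, I would push the whole coupled family forward by $\Phi$: since $\Phi\circ M=M$ by Lemma \ref{confinvM}, the image point process has the same intensity, and the normalizing maps transform by conjugation, $\bar f_u=\Phi\circ\hat f_u\circ\Phi^{-1}$ and hence $\bar F_u=\Phi\circ\hat F_u\circ\Phi^{-1}$, exactly as in the proof of the Corollary. This yields $\Phi(\tilde\gamma_u^z)=\bar F_u(\bar\gamma_u)$ for the $z$-targeted version, so that $(\Phi(D_u),u\ge0)$ is built from a point process with the same law as the one defining $(D_u)$, whence the two processes are identically distributed, with the time index $u$ untouched.

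I expect the main obstacle to be making rigorous the passage from the single-target statements to the genuinely reference-point-free domain process $(D_u)$. Each of the ingredient lemmas concerns a fixed target ($i$, or $z$), and the conformal covariance I need is covariance of the \emph{whole} growth mechanism, not just of one branch at a time. The delicate point is that the simultaneous coupling of $(\tilde\gamma^z)_{z\in\HH}$, which the text constructs using consistency of the pairwise couplings up to disconnection times $\sigma=\sigma^z=\sigma^{z'}$, must be shown to produce a well-defined process of domains whose law is genuinely invariant under \emph{all} Moebius maps and carries no hidden dependence on the normalization point used in its construction. I would handle this by checking that the time parametrization is intrinsic — it is the cumulative Loewner capacity / Poissonian local time of the excursion process, which Lemma \ref{confinvM} makes conformally invariant — and that the conjugation identity $\bar F_u=\Phi\circ\hat F_u\circ\Phi^{-1}$ respects this parametrization, so no time-change appears. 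Once these compatibility facts are in place, the proposition follows by combining the construction-of-CLE$_4$ half with the conformal-invariance half, each of which is essentially a restatement of the lemmas and corollary proved immediately above.
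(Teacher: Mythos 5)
Your proposal is correct and takes essentially the same approach as the paper: the paper gives no separate proof of Proposition \ref{propo}, stating it as a summary of exactly the ingredients you assemble --- the lemma that $(\tilde\gamma_u, u\le\tau)$ traces CLE loops, Lemma \ref{confinvM} and its corollary, the coupling lemma for $(\tilde\gamma^z_u, u\le\sigma^z)$, and the simultaneous coupling over all $z$ defining $(D_u)$. Your concluding remarks about the delicacy of passing from single-target statements to the reference-point-free domain process correspond to what the paper itself only sketches in the paragraph preceding the proposition.
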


\medbreak

Note that this uniform exploration mechanism can also be viewed as the limit (in law) of the asymmetric CLE$_{4,\mu}$ construction of
Proposition \ref {eqvincle2} in the limit when $\mu \to \infty$ (and the boundary points $+\infty$ and $-\infty$ of the upper half-plane are identified, alternatively, one can state this easily in the radial setting).  We leave the details to the interested reader.

\section {Comments and open questions}

\subsection {Some open questions.}
We first mention some natural open questions that are closely related to the present paper:

It is proved in \cite {SS,Du} that an SLE$_4$ can be deterministically drawn as the contour lines (or sometimes called level-lines or cliff-lines) in a Gaussian Free Field with appropriate boundary conditions. See also \cite {MS1} for the fact that the entire CLE$_4$ can be deterministically embedded in a Gaussian Free Field. Note that the symmetric exploration process looks a priori more naturally associated to the Gaussian Free Field than the asymmetric ones, because when one defines a Gaussian Free Field out of a CLE$_4$ with the coupling described in \cite {MS1}, one has to toss an independent coin for each CLE$_4$ loop to decide an orientation, so that the symmetric SLE($4, -2$) (including the coin tosses) is defined via the randomness present in the GFF. However, as shown in \cite {SWW}, the dynamic ``uniform'' construction of SLE$_4$ described in Proposition \ref {propo} actually also yields a rather natural construction of the GFF.

In general, the conformally invariant ways to construct a CLE that we described in the present paper via these branching Loewner chains induce additional information than just the CLE (which loop is discovered where, what is the starting and end-point of the loop when one uses a given exploration etc.).
This leads naturally to the following open questions:

\begin {enumerate}
\item If we are given a CLE$_\kappa$ in a simply connected domain $D$ and a starting point on the boundary of $D$, and a family of Bernoulli random variables $\epsilon (\gamma)$ with parameter $\beta$ (one for each CLE loop $\gamma$), is the asymmetric exploration process with parameter $\beta$ deterministically defined?
\item In particular, is the totally asymmetric exploration process (when $\beta=1$) sample a deterministic function of the CLE sample and of the starting point?
\item Is the uniform exploration process in fact a deterministic function of the CLE$_4$?
\end {enumerate}

A positive answer to this last question would give rise to a conformally invariant distance between loops in a CLE$_4$. This will be investigated further in  the forthcoming paper \cite {SWW}.

\subsection {Discrete explorations}

Our proofs rely a lot on the fact that the symmetric Bessel explorations do indeed construct the loops in a CLE, which was derived in \cite {CLE1} using a discretization of the exploration procedure that was proved to converge to the symmetric Bessel construction. The other CLE constructions that we have studied in the present paper also have natural discrete counterparts that we now briefly describe. However, it turns out to be (seemingly) technically more unpleasant to control the convergence of these asymmetric discrete exploration procedures than the symmetric ones, so that it seemed simpler to derive our results building on the relation between CLE's and the symmetric construction.

We first recall the exploration procedures described in \cite{CLE1} to explore a CLE little by little (here a CLE is just a collection of loops that satisfy the CLE axioms defined in \cite {CLE1}). It will be easier to explain things in the radial setting i.e. in the unit disk instead of the half-plane.

Suppose that $\Gamma=(\gamma_j,j\in J)$ is a CLE in the unit disc $\U$ and that $\epsilon>0$ is given. We denote $\gamma(z)$ as the loop of $\Gamma$ (when it exists) surrounding $z\in\U$.
Throughout this section, $D(1,\epsilon)$ will denote the image of the set $\{z\in\HH: |z|<\epsilon\}$ under the conformal map $\Psi: z\mapsto (i-z)/(i+z)$ from the upper half-plane $\HH$ onto the unit disc such that $\Psi(i)=0,\Psi(0)=1$. Note that for small $\epsilon$, this set is rather close to a small semi-disc centered at $1$.

At the first step, we ``explore'' the small shape $D(1,\epsilon)$ in $\U$, and we discover all the loops in $\Gamma$ that intersect $D(1,\epsilon)$.
If $\gamma(0)$ has already been discovered during this first step, we define $N=1$ and we stop. Otherwise, we
let $U_1$ denote the connected component that contains the origin of the set obtained when removing from
$U'_1=\U\setminus D(1,\epsilon)$ all the loops that do not stay in $U'_1$. From the restriction property in the CLE axioms, the conditional law of $\Gamma$ restricted to $U_1$ (given $U_1$) is that of a CLE in this domain.

We now choose some point $x_1$ on $\partial U_1$, and  the conformal map $\varphi^{\epsilon}_1$ from $U_1$ onto $\U$ such that
  $\varphi^{\epsilon}_1(0)=0$ and $\varphi^{\epsilon}_1 (x_1) = 1$.
Note that we allow here for different possible choices for $x_1$. It can be a deterministic function of $U_1$, but the choice of $x_1$ can also involve additional randomness (we can for instance choose it according to the harmonic measure at the origin etc.), but we impose the constraint that conditionally on $U_1$, the CLE restricted to $U_1$ and the point $x_1$ are conditionally independent (in other words, one is not allowed to use information about the loops in $U_1$ in order to choose $x_1$).

During the second step of the exploration, one discovers the loops of $\Gamma_1:= \varphi^{\epsilon}_1 (\Gamma \cap U_1) $ that intersect $D(1,\epsilon)$.
In other words, we consider the pushforward of $\Gamma$ by $\varphi^\epsilon_1$ (which has the same law as $\Gamma$ itself, due to the CLE axioms) and we repeat step 1. If we discover a loop that surrounds the origin at that step,
then we stop and define $N=2$. Otherwise, we define the connected component $U_2$ that contains the origin of the domain obtained when removing from $\U \setminus D(1,\epsilon)$ the loops of $\Gamma_1$ that do not stay in this domain, and we define the conformal map $\varphi^{\epsilon}_2$ from $U_2$ onto $\U$ with $\varphi^{\epsilon}_2(0)=0$ and $\varphi_2^\epsilon (x_2) = 1$, where $x_2$ is chosen in a conditionally independent way of $\Gamma_1 \cap U_2$, given $U_1$, $U_2$ and $x_1$.

We then explore $\Gamma_2 := \varphi_2^\epsilon (\Gamma_1)$ and so on.
We can iterate this procedure until the step $N$ at which we eventually ``discover" a loop that surrounds the origin. Note that $\gamma(0)$ (the loop in $\Gamma$ that surrounds the origin) is the preimage of this loop (the loop in $\Gamma_N$ that surrounds the origin and intersects $D(1,\epsilon)$) under $\varphi_N^\epsilon \circ \cdots\circ \varphi_1^\epsilon$.

In this definition, the discrete exploration ``strategy'' is encoded by $\epsilon$ (the ``step-size'') and by the rule used to choose the $x_n$'s.
Since the probability to discover the loop at each given step $n$ (conditionally on the fact that it has not been discovered before) is constant and positive, it
follows that $N$ is almost surely finite, that its law is
 geometric (regardless of the choice of $x_n$'s).

 \medbreak

In \cite {CLE1}, it is shown that if a CLE (ie. satisfying the CLE axioms exist), then its loops are of SLE$_\kappa$-type for some $\kappa \in (8/3,4]$ and that it is necessarily the one constructed via the symmetric Bessel construction (and therefore unique). Conversely (using a different argument involving loop-soups) it is shown that these CLE do exist.
The strategy of one part of the proof is to control the behavior of certain natural discrete exploration strategies when $\epsilon$ tends to $0$:

The first one is the ``exploration normalized at the origin''. Here, at each step,  $x_n$ and $\varphi_n^\epsilon$  are chosen according to the rule that
$({\varphi^{\epsilon}_n})'(0)$ is a positive real number. In other words, we choose $\varphi_n^\epsilon$ using the standard normalization at the origin.

Towards the end of the paper \cite {CLE1}, it is shown that the symmetric exploration indeed constructs an axiomatic CLE, by using the following symmetric discrete exploration procedure: Define
$1^+_\epsilon $ and $1^-_\epsilon$
the two intersections of $\partial D(1, \epsilon)$ with the unit circle. At each step, one tosses a (new) fair coin to decide which one of the two points gets mapped conformally onto $1$. In other words, the maps $\varphi_n^\epsilon$ are i.i.d., $x_1$ is independent of $U_1$, and $$ P (x_1 = 1_\epsilon^+)= P( x_1 = 1_\epsilon^-) = 1/2.$$

The definition of the asymmetric discrete explorations is then natural: For a given $\beta$, we toss a $(1+\beta)/2$ vs. $(1-\beta)/2$ coin in order to chose which one of the two points $1_\eps^+$ or $1_\eps^-$ to choose, but in order to compensate the created bias, we post-compose the obtained map $\tilde \varphi_n^\epsilon$ with a deterministic rotation of some angle $\theta ( \eps)$ that vanishes as $\epsilon \to 0$ (that corresponds to the jump in the approximation $I^{(\beta, r)}$ of $I^{(\beta)}$). However, we see that this rotation depends on the chosen base-point (here the origin); this is one reason for which this discrete approximation is a little harder to master than in the case $\beta=0$.

The definition of uniform discrete approximations is also very natural: Just choose $x_n$ at random on the boundary of $U_n$ according to the harmonic measure seen from $0$. Equivalently, choose any $\varphi_n^\epsilon$ and compose it with a uniformly chosen rotation. Again, this rule depends on the target point (the origin) -- but this one is less tricky to control as $\epsilon \to 0$.
We leave it to the interested (and motivated) reader to check that these discrete explorations indeed converge in distribution to the continuous CLE constructions that we have studied in the present paper.

\subsection* {Acknowledgements.}
The research of H.W. is funded by the fondation CFM-JP Aguilar for research. The authors also acknowledge support of the MAC2 project of ANR, as well as the hospitality of
TU Berlin/Einstein Foundation. The authors thank the referee for insightful and very helpful comments on the first versions of this paper.

\smallbreak

H.W. and W.W.:

Laboratoire de Math\'ematiques, Universit\'e Paris-Sud, 91405 Orsay cedex, France

\medbreak

W.W.:

DMA, ENS, 45 rue d'Ulm, 75230 Paris cedex 05, France

\medbreak

hao.wu@math.u-psud.fr

wendelin.werner@math.u-psud.fr


\begin{thebibliography}{99}

\bibitem {bertoin}
Jean Bertoin,
L\'evy processes, Cambridge University Press, 1998.

\bibitem {ChSm}
Dmitry Chelkak and Stanislav Smirnov,
Universality in the 2D Ising model and conformal invariance of fermionic observables,
Invent. Math. 189 (2012), no. 3, 515每580.

\bibitem {Du}
Julien Dub\'edat,
SLE and the Free Field: Partition functions and couplings,
J. Amer. Math. Soc. {\bf 22}, 995-1054.

\bibitem {KW}
Antti Kemppainen and Wendelin Werner,
Conformal Loop Ensembles in the Riemann Sphere,
in preparation.

\bibitem {Lawler}
Gregory F. Lawler,
Conformally invariant processes in the plane,
AMS, 2005.

\bibitem {MS1}
Jason Miller and Scott Sheffield,
Imaginary Geometry I: Interacting SLEs,
Preprint.

\bibitem {MS2}
Jason Miller and Scott Sheffield,
Imaginary geometry II: reversibility of SLE$_{\kappa}(\rho_1;\rho_2)$ for $\kappa\in (0,4)$,
Preprint.

\bibitem {MS3}
Jason Miller and Scott Sheffield,
Imaginary geometry III: reversibility of SLE$_{\kappa}$ for $\kappa\in (4,8)$,
Preprint.

\bibitem {SS}
Oded Schramm and Scott Sheffield,
Contour lines of the two-dimensional discrete Gaussian Free Field,
Acta Math. {\bf 202}, 21-137 (2009).

\bibitem {SchSh}
Oded Schramm and Scott Sheffield,
A contour line of the continuum Gaussian free field,
Probab. Theor. rel. Fields, to appear (2012).

\bibitem {WilsonSchramm}
Oded Schramm and David B. Wilson,
SLE coordinate changes,
New York J. Math., {\bf 11}, 659-669 (2005).

\bibitem{Sheffield09}
Scott Sheffield, {Exploration trees and conformal loop ensembles}, Duke
  Math. J. \textbf{147}  79-129 (2009).

\bibitem {Sheffield11}
Scott Sheffield,
Conformal weldings of random surfaces: SLE and the quantum gravity zipper,
preprint.

\bibitem{CLE1}
Scott Sheffield and Wendelin Werner, {Conformal loop ensembles: The
  Markovian characterization and the loop-soup construction}, Ann. Math. 76:3, 1827-1917  (2012).

\bibitem {SWW}
{Scott Sheffield, Sam Watson and Hao Wu, in preparation.}

\bibitem {Zhan}
Dapeng Zhan,
Reversibility of chordal SLE,
 Ann. Probab., {\bf 36}, 1472-1494 (2008).
\end{thebibliography}
\end{document}